\documentclass[reqno, 12pt]{amsart}
\usepackage{amsmath,amssymb, amsthm}
\usepackage{hyperref}
\textwidth=16cm
\voffset -1cm
\hoffset -1cm
\textheight=21cm
 \newtheorem{theorem}{Theorem}[section]
\newtheorem{proposition}{Proposition}[section]

\newtheorem{lemma}{Lemma}[section]
 \newtheorem{remark}{Remark}[section]
\usepackage{hyperref}

\usepackage{color}

\begin{document}

\def\eps{\varepsilon}

\title[Liouville-type theorem]
{A Liouville-type theorem for the $3$-dimensional parabolic Gross-Pitaevskii and related systems} 

 \author[Q. H. Phan]{Quoc Hung Phan}
 
 \address{Institute of Research and Development, Duy Tan University, Da Nang, Vietnam}
 \email{phanquochung@dtu.edu.vn }

 \author[Ph. Souplet]{Philippe Souplet }
 
 \address{Universit\'e Paris 13, Sorbonne Paris Cit\'e,
CNRS UMR 7539, Laboratoire Analyse, G\'{e}om\'{e}trie et Applications,
93430 Villetaneuse, France}
\email{souplet@math.univ-paris13.fr}

\subjclass{Primary: 35B53, 35K58; Secondary: 35B33, 35B44.}
 \keywords{Liouville-type theorem, Parabolic system, Gross-Pitaevskii system,  
 Singularity estimate, Universal estimate, Blow-up rate}
\begin{abstract}
We prove a Liouville-type theorem for semilinear parabolic systems of the form  
$${\partial_t u_i}-\Delta u_i =\sum_{j=1}^{m}\beta_{ij} u_i^ru_j^{r+1},  \quad i=1,2,...,m$$
 in the whole space ${\mathbb R}^N\times {\mathbb R}$. Very recently, Quittner [{\em Math. Ann.}, DOI 10.1007/s00208-015-1219-7 (2015)] has established an optimal result for $m=2$ in dimension $N\leq 2$, and partial results in higher dimensions in the range $p< N/(N-2)$. By nontrivial modifications of the techniques of Gidas and Spruck and of Bidaut-V\'eron, we partially improve the results of Quittner in dimensions $N\geq 3$. In particular, our results solve the important case of the parabolic Gross-Pitaevskii system -- i.e.~the cubic case $r=1$ -- in space dimension $N=3$, for any symmetric $(m,m)$-matrix $(\beta_{ij})$ with nonnegative entries, positive on the diagonal.
By moving plane and monotonicity arguments,  that we actually develop for more general cooperative systems,
we then deduce a Liouville-type theorem in the half-space  ${\mathbb R}^N_+\times {\mathbb R}$.
As applications, we give results on universal singularity estimates, universal bounds for global solutions, and blow-up rate estimates for the corresponding initial value problem.
\end{abstract}

\maketitle
\section{Introduction}
In this article, we study the semilinear parabolic system of the form 
\begin{align}\label{c51}
 \frac{\partial u_i}{\partial t}-\Delta u_i =\sum_{j=1}^{m}\beta_{ij} u_i^ru_j^{r+1}, \qquad x\in \Omega,\quad t\in I, \quad i=1,2,...,m,
\end{align}
 where $r>0$,  $\Omega$ is a domain of ${\mathbb R}^N$,   $I$ is an interval of $\mathbb R$, $m\ge 2$ is an integer 
 and $B=(\beta_{ij})$ is a real $m\times m$ symmetric matrix. We assume throughout, unless otherwise specified, that 
 \begin{align}\label{cooperativecond}
   \beta_{ij}\geq 0 \; \text{ for all } \; i\ne j, \quad \text{and } \beta_{ii}>0 \text{ for all } i.
 \end{align}

The system~(\ref{c51})  can be  used to describe heat propagation in a $m$-component combustible mixture \cite{Beb89}, in this case $u_i$ represent the temperatures of the interacting components. 

In the special case $r=1$, system~(\ref{c51}) can be seen as a parabolic counterpart of the $m$-coupled nonlinear Schr\"odinger system
\begin{align}\label{Schrodinger}
 \frac{1}{\sqrt{-1}}\frac{\partial u_i}{\partial t}-\Delta u_i =\sum_{j=1}^{m}\beta_{ij} u_i|u_j|^{2},  \quad i=1,2,...,m.
\end{align}
The cubic system (\ref{Schrodinger}), also known as the Gross-Pitaevskii system, arises in mathematical models for various phenomena in physics, such as  nonlinear optics and  the Hartree-Fock theory for Bose-Einstein condensation (see e.g.  \cite{BDW10, DWW10, WW08, DWZ11}). In nonlinear optics,  the solution $u_i$ stands for the $i$-th component of the beam in Kerr-like photorefractive media (see e.g. \cite{AA99}). The positive constant $\beta_{ii}$ is for self-focusing in the $i$-th component of the beam. The coupling constant $\beta_{ij}\  (i\ne j)$ is the interaction between the $i$-th and the $j$-th components of the beam. In the theory of Bose-Einstein condensation (see \cite{Hio99}), $u_i$ are the corresponding condensate amplitudes, $\beta_{ii}$ and $\beta_{ij}$ are the intraspecies and interspecies scattering lengths. The case $\beta_{ij}\geq 0$ means that  the interactions of states $|i\rangle$ and $|j\rangle$ are attractive.
\smallskip

System~(\ref{c51}) has been recently studied in various mathematical directions such as: the local and global existence \cite{Ama85, Mor89}, 
H\"older regularity \cite{DWZ11}, symmetry property \cite{Pol09, FP09}, blow-up behavior \cite{MZ00},  and Liouville-type theorems \cite{Pha14, Qui14, MZ00}. Our main goal in this paper is to prove Liouville-type theorems  for the  problem (\ref{c51}) and then to deduce their important applications on qualitative properties of solutions.

\smallskip
We recall that  Liouville-type theorems are statements about the nonexistence of solution  in the entire space or in half-space. In recent years, the Liouville property has been refined considerably and has emerged as one of the most powerful tools in the study of initial and boundary value problems for nonlinear PDEs. It turns out that one can obtain from Liouville-type theorems a variety of results on qualitative properties of solutions such as: universal, pointwise, a priori estimates of local solutions; universal and singularity estimates; decay estimates; universal bound of global solutions, initial and final blow-up rates, etc..., see \cite{PQS07, PQS07b} and references therein. In addition, it was shown in \cite{WW08} that the parabolic system (\ref{c51}) can be used in the study of solutions of the corresponding elliptic problems, provided one can show suitable a priori bound of the global solutions of (\ref{c51}). This a priori bound property is a consequence of the Liouville-type theorems. 
\smallskip

Let us recall the elliptic counterpart 
\begin{align}\label{c52}
 -\Delta u_i =\sum_{j=1}^{m}\beta_{ij} u_i^ru_j^{r+1}, \quad x\in {\mathbb R}^N, \quad i=1,2,...,m.
\end{align}
This system has attracted much attention of mathematicians in recent years, especially for the cubic case $r=1$, see e.g.  \cite{QS11, GLW14, TTVW, DW12, DWW10, MZ08} for more references. Concerning the Liouville property, it is well known that the Liouville-type  result for (\ref{c52})  plays an important role in the study elliptic problems as well. 
The optimal Liouville-type theorem for nonnegative solutions of (\ref{c52}) was completely proved by Reichel and Zou \cite{RZ00} (see also \cite{GL08}) via moving sphere techniques, under  the optimal Sobolev subcritical range $p<p_S$, where
$$p:=2r+1$$ 
and 
\begin{align*}
p_S:=\begin{cases}
\frac{N+2}{N-2}\quad &\text{if}\quad N\geq 3,\\
\infty \quad &\text{if}\quad N=1,2.
\end{cases}
\end{align*}
We note that, if we remove the positivity assumption on the diagonal of $B$, then problem (\ref{c52}), and hence  (\ref{c51}), may have many semi-trivial solutions due to system collapsing, i.e., solutions with one or more components being zero.  
For example, if $\beta_{11}=0$ then $U=(C, 0,...,0)$ is a solution of problem (\ref{c52}). And indeed, the study of the system (\ref{c52}) becomes more delicate due to the existence of semi-trivial solutions, especially in the applications of Liouville-type theorems (cf.~\cite{BVR96, MSS14, LLW15} and see Remark~\ref{remsemitrivial} below). 

\smallskip
For the corresponding parabolic problem (\ref{c51}), the Liouville property is much less understood. 
Recall that, even in the scalar case, i.e. for the classical nonlinear heat equation
\begin{equation}\label{NLH}
u_t-\Delta u=u^p,
\end{equation}
it is by now not completely settled. Indeed, the Liouville property for (\ref{NLH}) is conjectured to be true under the optimal
condition $1<p<p_S$ (and this is known to hold in the class of radiallly symmetric solutions \cite{PQ06, PQS07}),
but this has been proved so far only under the stronger restriction  $1<p<p_B$ \cite{BV98}, where
\begin{equation}\label{defBV}
p_B(N):=\begin{cases}
\frac{N(N+2)}{(N-1)^2}\quad &\text{if}\quad N\geq 2,\\
\infty \quad &\text{if}\quad N=1,
\end{cases}
\end{equation}
or, very recently \cite{Qui14}, for $N=2$.
One of the main difficulties is that the techniques of moving planes or moving spheres do not work as in the elliptic case \cite{RZ00}. 
As for system~(\ref{c51}) 
under assumption (\ref{cooperativecond}), only some partial cases are known:  
\begin{itemize}
\item First, one can easily obtain a scalar parabolic inequality $\partial_tz-\Delta z\geq Cz^p$ for $z=\sum_{i=1}^m u_i$ and deduce the Fujita-type result of problem (\ref{c51}), namely there is no nontrivial nonnegative solution in ${\mathbb R}^N\times \mathbb R_+$ if   $1<p\leq 1+\frac{2}{N}$.  
\vskip 2pt

\item In the case $m=2$,  the Liouville-type theorem for (\ref{c51}) has been recently  proved in \cite{Pha14} in dimension $N=1$ and for radial solutions in any dimension if $p<p_S$. 
More recently, Quittner~\cite{Qui14} has proved the optimal Liouville-type theorem  in dimensions $N\leq 2$, and has also given a partial result in  dimension $N\geq 3$ under the condition $p< \frac{N}{N-2}$. The main tools in \cite{Qui14} are scaling argument  and energy estimates. 
\vskip 2pt

\item Under the assumption $m=2$, $\beta_{ii}=0$ for $i=1,2$ and $\beta_{12}>0$, the Liouville-type theorem for positive solutions of problem (\ref{c51}) can be shown via comparison technique (see \cite{Qui15} and \cite{QS12, MSS14} for the elliptic case). 
More precisely, by taking the difference of the two equations and suitably using the maximum principle, we may show that  $u=v$ and thus reduce the system to a scalar equation.  
\end{itemize}

In this paper, we shall use a different approach to establish a Liouville-type theorem for problem~(\ref{c51}) in the whole space in a larger range of $p$ and for any $m$. 
We shall then treat Liouville-type theorems in the half-space by reduction to the whole space case. 

\section{Liouville type results}

 Our main result in the whole space case is the following. 
 
\begin{theorem}\label{th1} Let $m\ge 2$, $r>0$ and assume $p:=2r+1<p_B(N)$, where $p_B(N)$ is defined in (\ref{defBV}). 
Let $B$ satisfy (\ref{cooperativecond}). Then  system (\ref{c51}) has no nontrivial, nonnegative classical solution in ${\mathbb R}^N\times {\mathbb R}$.
\end{theorem}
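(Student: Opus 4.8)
\section*{Proof proposal}

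The plan is to adapt the Gidas--Spruck/Bidaut-V\'eron Bernstein-type method for the parabolic setting, in the spirit of \cite{BV98}, but carrying an extra system ingredient to handle the coupling. First I would reduce to the case of a \emph{cooperative, fully coupled} system: using that $B$ is symmetric with $\beta_{ii}>0$ and $\beta_{ij}\ge 0$, and that the solution is nonnegative and nontrivial, one shows (via the strong maximum principle applied to each component, plus an irreducibility/graph-connectedness discussion of the index set) that either the system splits into independent subsystems on subsets of indices — each of which can be treated separately — or every $u_i$ is strictly positive on ${\mathbb R}^N\times{\mathbb R}$. So without loss of generality all components are positive everywhere. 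A convenient device is to introduce the total mass $z:=\sum_{i=1}^m u_i$, which satisfies $\partial_t z-\Delta z=\sum_i u_i^r(\sum_j\beta_{ij}u_j^{r+1})\ge c\,z^{2r+1}=c\,z^p$ for a suitable $c>0$ depending on $B$; this gives cheap a priori bounds (Fujita exponent, $L^\infty$ control on parabolic cylinders via the scalar theory) that will be used as auxiliary estimates.

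The core of the argument is an energy/Bernstein estimate. Following Bidaut-V\'eron's approach to \eqref{NLH}, one multiplies the $i$-th equation by $u_i^{q-1}\varphi^{2\ell}$ (and also works with the gradient terms $|\nabla u_i|^2 u_i^{q-2}\varphi^{2\ell}$), where $q>1$ is a free parameter, $\varphi$ is a standard space-time cutoff supported in a parabolic cylinder $Q_{2\rho}=B_{2\rho}\times(-4\rho^2,4\rho^2)$, and $\ell$ large. Summing over $i$ and using the cooperativity to keep the nonlinear terms with a favourable sign, one derives, after integrations by parts and careful use of Young's inequality to absorb the cutoff-derivative terms, a differential-inequality/feedback scheme on weighted norms of the $u_i$ and $\nabla u_i$. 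The admissible range of the exponent $q$ for which all error terms can be absorbed is exactly what produces the restriction $p<p_B(N)$; the number $\tfrac{N(N+2)}{(N-1)^2}$ appears through the same algebraic bookkeeping as in the scalar case \eqref{defBV}, with the extra care that the off-diagonal terms $\beta_{ij}u_i^{r+1}u_j^r$ (note the asymmetry in the powers after testing) must be dominated by the diagonal ``good'' terms — here symmetry of $B$ together with the pointwise inequality $u_i^{r+1}u_j^r+u_j^{r+1}u_i^r\le u_i^{2r+1}+u_j^{2r+1}$ (or a weighted Young variant) is the key structural fact that reduces the system estimate to a scalar-type one.

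Once the weighted estimate is established, letting $\rho\to\infty$ forces $\int_{{\mathbb R}^N\times{\mathbb R}} u_i^{q+\text{(something)}}<\infty$ or a scale-invariant bound that, combined with the parabolic regularity and the scalar bound on $z$, forces $\nabla u_i\equiv 0$ and then $u_i$ constant in space; plugging a spatially constant solution back into \eqref{c51} and using $p>1$ shows the only bounded entire-in-time solution of the resulting ODE system is $u_i\equiv 0$, contradicting nontriviality. I expect the main obstacle to be the \emph{feedback/absorption step} in the Bernstein estimate: keeping track of all the cross terms coming from both the time derivative and the Laplacian after multiple integrations by parts, and checking that the coupling terms never spoil the sign structure — this is where ``nontrivial modifications'' of the classical technique are needed, and where the precise form of the exponent range $p<p_B(N)$ is pinned down. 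A secondary technical point is justifying all integrations by parts and the limit $\rho\to\infty$ for merely classical (a priori unbounded) solutions, which requires first upgrading to local bounds via the scalar inequality for $z$ and interior parabolic estimates.
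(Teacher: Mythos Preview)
Your overall strategy matches the paper's: adapt Bidaut-V\'eron's parabolic Gidas--Spruck scheme and show the coupling terms can be absorbed without loss. But the two concrete mechanisms you single out are not the ones that actually make the argument close.

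\textbf{Cross terms.} You propose the zero-order Young inequality $u_i^{r+1}u_j^r + u_j^{r+1}u_i^r \le u_i^{2r+1} + u_j^{2r+1}$ as the ``key structural fact''. The paper's device is different and sharper. In estimating $J = \sum_i \int \varphi\, u_i^{-1}|\nabla u_i|^2 \Delta u_i$, one integrates by parts in $\int \varphi\, |\nabla u_i|^2 u_i^{r-1} u_j^{r+1}$ and produces the \emph{gradient} cross term $\int \varphi\, u_i^r u_j^r\, \nabla u_i \cdot \nabla u_j$; this is controlled by
\[
2u_i^r u_j^r\, \nabla u_i \cdot \nabla u_j \;\le\; |\nabla u_i|^2 u_i^{r-1} u_j^{r+1} + |\nabla u_j|^2 u_i^{r+1} u_j^{r-1},
\]
after which the symmetry $\beta_{ij}=\beta_{ji}$ makes the right-hand side, summed over $i,j$, reabsorbable with \emph{exactly} the scalar coefficient. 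One obtains $-J \ge \frac{1}{p}L + (\text{l.o.t.})$ with the same $1/p$ as for a single equation, so the algebra producing $p_B(N)$ goes through unchanged. A pointwise inequality on the zero-order terms would not obviously preserve this constant and risks shrinking the range.

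\textbf{Unbounded solutions.} The supersolution inequality $\partial_t z - \Delta z \ge c z^p$ does \emph{not} give $L^\infty$ control via ``the scalar theory'' when $p>1+2/N$: there is no Liouville theorem for the parabolic inequality in that range (positive entire solutions of the equation exist), hence no universal bound follows from it, and in any case rescaling/doubling needs an equation for compactness. The paper instead first proves the theorem for \emph{bounded} solutions --- boundedness being used only to upgrade ``$u_i$ vanishes somewhere'' to ``$u_i\equiv 0$'' via $\partial_t u_i - \Delta u_i \le C u_i$ --- and then invokes the universal estimate of Proposition~\ref{c5th5}, whose proof requires only the bounded Liouville result, to conclude that every entire solution is automatically bounded. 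No graph-connectedness discussion is needed: one simply drops identically-zero components.

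Finally, the endgame is more direct than your $\nabla u_i\equiv 0$ plus ODE route: once $\int_{Q_{1/2}} \sum_i \bigl(\sum_j \beta_{ij} u_i^r u_j^{r+1}\bigr)^2 \le C$ is established, rescaling gives $\int_{Q_R}(\cdots) \le C R^{N+2-4p/(p-1)}\to 0$ (since $p<p_S$), forcing $\beta_{ii} u_i^{2r+1}\equiv 0$, which directly contradicts positivity.
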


\begin{remark} \label{rm1} 
a) We stress that $p_B(N)>\frac{N}{N-2}$ when $N\geq 3$, and  our result is a partial improvement of Quittner~\cite{Qui14} in higher dimensions. 
In particular, it solves  the important case of the parabolic Gross-Pitaevskii (cubic) system where $r=1$ and $N=3$. 

 \smallskip
b) If $N=2$ then, by \cite{Qui14}, the conclusion of Theorem~\ref{th1} is actually true for all $r>0$ (the result in \cite{Qui14} is formulated only in the case $m=2$ but the proof is valid for any $m\geq 2$).

\smallskip
c) Our proof of Theorem~\ref{th1} relies on nontrivial modifications of the technique developed by Bidaut-V\'eron \cite{BV98} for the scalar nonlinear heat equation. The latter was an adaptation of the celebrated method of Gidas and Spruck~\cite{GS81a} for elliptic equations (see also \cite{BVR96} for some particular elliptic systems). It is based on nonlinear integral estimates and Bochner formula. This technique is completely different from that of \cite{Qui14}, which relies on scaling and energy arguments. 

\smallskip
d) In \cite{MZ00}, Merle and Zaag proved Liouville-type theorems for the so-called ancient solutions of the system
\begin{align}\label{MZeq}
U_t-\Delta U=F(|U|)U,
\end{align}
with $F(|U|) \sim |U|^{p-1}$ as $|U|\to \infty$, under the assumption $p<p_S$ and 
\begin{align}\label{MZ}
u_i(x, t) \leq  C(T -t)^{-1/(p-1)}.
\end{align}
Namely they showed that any solution of (\ref{MZeq}) in ${\mathbb R}^N\times (-\infty,T)$ which satisfies  
(\ref{MZ}) is independent of the space variable. 
In the special case $F(|U|)=|U|^2$ ($p=3$),  Proposition~\ref{c5th5} below (which is a consequence of Theorem~\ref{th1}) guarantees that (\ref{MZ}) holds if $3<p_B(N)$. Thus,  the estimate (\ref{MZ}) is always true if $N\le 3$.
\end{remark}

 We now turn to the case of a half-space ${\mathbb R}^N_+=\{x\in {\mathbb R}^N: x_1>0\}$.

\begin{theorem}\label{Liouhalf}
Let $r\geq 1$ and assume either $N\le 3$ or $N=4$ and $p=2r+1<p_B(3)=15/4$. Let $B$ satisfy (\ref{cooperativecond}).
Then the problem
\begin{align}\label{halfspace}
\begin{cases}
\displaystyle \frac{\partial u_i}{\partial t}-\Delta u_i &=\displaystyle\sum_{j=1}^{m}\beta_{ij} u_i^ru_j^{r+1}, \qquad x\in {\mathbb R}^N_+,\quad t\in \mathbb R, \quad i=1,2,...,m,\\
\noalign{\vskip 1mm}
u_i&=0, \qquad x\in \partial {\mathbb R}^N_+,\quad t\in \mathbb R, \quad i=1,...,m,
\end{cases}
\end{align}
has no nontrivial, nonnegative, bounded classical solution.
\end{theorem}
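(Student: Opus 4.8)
The strategy is to deduce the half-space Liouville theorem from the whole-space result, Theorem~\ref{th1}, via a moving-planes-plus-monotonicity argument applied in the $x_1$-direction, exactly as announced in the introduction. First I would reflect the solution: given a nontrivial, nonnegative, bounded classical solution $u=(u_1,\dots,u_m)$ of \eqref{halfspace} on $\mathbb R^N_+\times\mathbb R$, I would show that each $u_i$ is strictly positive in the open half-space and then prove the monotonicity statement
\[
\partial_{x_1} u_i(x,t)\ge 0 \quad\text{for all }x\in\mathbb R^N_+,\ t\in\mathbb R,\ i=1,\dots,m.
\]
Because the system is cooperative under \eqref{cooperativecond} and the nonlinearities $u_i^r u_j^{r+1}$ are locally Lipschitz in $u$ on the range of a \emph{bounded} solution, the classical moving-plane machinery for parabolic systems on the half-space (in the spirit of the elliptic results of \cite{RZ00} and the parabolic symmetry results of \cite{Pol09, FP09}) applies: one slides the planes $\{x_1=\lambda\}$ from $\lambda=0$, using the maximum principle for cooperative systems and the boundedness of $u$ to start the procedure near the boundary and to show the process never stops, obtaining monotonicity in $x_1$ and hence the existence of the limits $v_i(x',t):=\lim_{x_1\to\infty}u_i(x_1,x',t)$, which are finite since $u$ is bounded.

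Next I would identify the limit profile. Standard parabolic interior estimates (the solution is bounded, so one has uniform $C^{2+\alpha,1+\alpha/2}_{loc}$ bounds by Schauder theory) allow passing to the limit along $x_1\to\infty$ in the translated solutions $u(\cdot+\tau e_1,\cdot)$; the limit $v=(v_1,\dots,v_m)$ is a bounded, nonnegative classical solution of the \emph{same} system \eqref{c51} but now on the whole space $\mathbb R^{N-1}\times\mathbb R$ (it is independent of $x_1$). The hypotheses are arranged precisely so that Theorem~\ref{th1} applies in dimension $N-1$: if $N\le 3$ then $N-1\le 2$, and by Remark~\ref{rm1}(b) the whole-space Liouville theorem holds in dimension $2$ for \emph{all} $r>0$ (in particular for $r\ge 1$); if $N=4$ then $N-1=3$ and the assumption $p=2r+1<p_B(3)=15/4$ is exactly the condition needed to invoke Theorem~\ref{th1} in dimension $3$. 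Hence $v\equiv 0$.

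Finally I would derive a contradiction. Knowing $u_i(x_1,x',t)\to 0$ as $x_1\to\infty$, uniformly on compact sets in $(x',t)$ by the interior estimates, combined with the monotonicity $\partial_{x_1}u_i\ge 0$, forces $u_i\equiv 0$ on all of $\mathbb R^N_+\times\mathbb R$: a nonnegative, $x_1$-nondecreasing function whose limit as $x_1\to\infty$ is $0$ must vanish identically. This contradicts nontriviality, so no such solution exists. The main obstacle I anticipate is the rigorous execution of the moving-plane step for the parabolic \emph{system}: one must check that the maximum principle applies uniformly in $t\in\mathbb R$ (no initial time to exploit), handle the fact that the domain is unbounded in the sliding direction, and combine the $m$ components correctly—this is where the cooperativity \eqref{cooperativecond}, the positivity of each $u_i$, and the boundedness of the solution are all essential, and it is presumably why the restriction $r\ge 1$ and the boundedness hypothesis appear in the statement. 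The passage to the limit and the final contradiction are comparatively routine once monotonicity and the correct dimensional bookkeeping are in place.
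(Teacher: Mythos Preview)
Your overall strategy matches the paper's: apply moving planes (the paper packages this as Theorem~\ref{general}) to obtain monotonicity in $x_1$, then pass to the limit as $x_1\to\infty$ to reduce to the whole-space Liouville theorem in dimension $N-1$. Your dimensional accounting for the two cases $N\le 3$ and $N=4$ is correct.

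The one genuine gap is the positivity step. You write that you will show each $u_i$ is strictly positive, but the strong maximum principle (applied to $u_i$ as a supersolution of the heat equation, together with the linear bound $\partial_t u_i-\Delta u_i\le C u_i$ coming from boundedness) only yields that each component is either identically zero or strictly positive; it does not exclude some $u_{i_0}\equiv 0$ while the remaining components are nontrivial. Positivity of \emph{all} components is precisely hypothesis $(H_4)$ of Theorem~\ref{general}, and it is genuinely used in the moving-plane argument (in Step~3 of the proof of Theorem~\ref{general} one needs the limit profile $\tilde U$, itself a solution of the system, to have all components positive whenever it is nontrivial). The paper closes this gap by an induction on $m$: if some $u_{i_0}\equiv 0$, drop that component and obtain a nontrivial, bounded, nonnegative solution of the $(m{-}1)$-component half-space problem, contradicting the induction hypothesis; the base case $m=1$ is the scalar half-space Liouville theorem from \cite{PQS07b,Qui14}. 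You should insert this induction.

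A minor cosmetic difference in the endgame: the paper observes that monotonicity together with positivity of $u$ makes the limit profile \emph{positive}, hence nontrivial, contradicting Theorem~\ref{th1} directly. Your variant---conclude the limit is trivial by Theorem~\ref{th1}, then use monotonicity to force $u\equiv 0$---is logically equivalent.
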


\begin{remark}
The upper restrictions on $N$ in  Theorem~\ref{Liouhalf} are consequences of the condition $r\geq 1$,
which is required by the use of the moving plane method in the proof of Theorem~\ref{Liouhalf}.
\end{remark}

 Theorem~\ref{Liouhalf} is a consequence of the following result which can be applied for more general cooperative parabolic systems. 

\begin{theorem}\label{general}
Consider the following system
\begin{align}\label{coop}
\begin{cases}
\displaystyle \frac{\partial u_i}{\partial t}-\Delta u_i&=f_i(u_1,...,u_m), \quad x\in {\mathbb R}^N_{+}, \; t\in {\mathbb R}, \; i=1,...,m,\\
u_i&=0, \quad  x\in \partial{\mathbb R}^N_{+}, \; t\in {\mathbb R}, \;  i=1,...,m.
\end{cases}
\end{align}
Assume that $f_i:[0,\infty)^m\to {\mathbb R}$ are  $C^1$-functions satisfying: 
\begin{align*}
&(H_1) \quad f_i(0,...,0)=0, \quad i=1,...,m,\\
&(H_2)\quad \frac{\partial f_i}{\partial u_j}(u_1,...,u_m)\geq 0, \quad \text{for all } (u_1,...,u_m)\in [0,\infty)^m \ \text{ and all $i\ne j$}, \\
&(H_3) \quad  \sum_{j=1}^m 
\frac{\partial f_i}{\partial u_j}(0,...,0) {\, \le\ } 0, \quad \text{for all } i,\\
\noalign{\noindent and that}
&(H_4) \quad \text{ any nontrivial, nonnegative, bounded solution of (\ref{coop}) is positive in ${\mathbb R}^N_{+}\times{\mathbb R}$.}
\end{align*}
Then any nontrivial, nonnegative,  bounded solution $U=(u_i)$  of (\ref{coop}) is increasing in~$x_1$:
\begin{align}\label{monotonex1}
\frac{\partial u_i}{\partial x_1}(x,t)>0, \; x\in {\mathbb R}^{N}_{+},\; t\in {\mathbb R}, \; i=1,...,m.
\end{align}
\end{theorem}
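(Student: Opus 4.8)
The proof proceeds by the parabolic moving plane method applied to (\ref{coop}); the new twist compared with the classical (elliptic, or initial‑value) setting is that the solution is \emph{eternal}, i.e.\ defined for all $t\in\mathbb{R}$, so that the usual parabolic maximum principle — which needs an initial time — is unavailable and must be replaced by maximum principles for bounded eternal solutions. For $\lambda>0$ set $x^\lambda=(2\lambda-x_1,x_2,\dots,x_N)$, $\Sigma_\lambda=\{x\in\mathbb{R}^N_+:0<x_1<\lambda\}$ and $w_i^\lambda(x,t)=u_i(x^\lambda,t)-u_i(x,t)$. Since the $f_i$ are $C^1$ and $U$ is bounded, $w^\lambda=(w_i^\lambda)$ solves a linear parabolic system $\partial_t w_i^\lambda-\Delta w_i^\lambda=\sum_j c_{ij}^\lambda(x,t)\,w_j^\lambda$ in $\Sigma_\lambda\times\mathbb{R}$, with $c_{ij}^\lambda=\int_0^1\partial_{u_j}f_i\big(U+s(U^\lambda-U)\big)\,ds$ bounded, $c_{ij}^\lambda\ge0$ for $i\ne j$ by $(H_2)$, and $\sum_j c_{ij}^\lambda\to\sum_j\partial_{u_j}f_i(0,\dots,0)\le0$ as $\lambda\to0^+$ (using $(H_3)$ and that $U$, $U^\lambda$ both vanish on $\{x_1=0\}$ together with parabolic boundary estimates); on the parabolic boundary, $w_i^\lambda=u_i(x^\lambda,t)\ge0$ on $\{x_1=0\}$ and $w_i^\lambda=0$ on $\{x_1=\lambda\}$.

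The key tool I would isolate is a maximum principle for bounded eternal cooperative systems in slabs: if $V=(v_i)$ is bounded, $v_i\ge0$ on the parabolic boundary of a slab $D=\{0<x_1<d\}$, $d<\infty$, and $\partial_t v_i-\Delta v_i\ge\sum_j c_{ij}v_j$ in $D\times\mathbb{R}$ with $c_{ij}$ bounded, $c_{ij}\ge0$ for $i\ne j$, and either $d$ sufficiently small (depending on $\sup|c_{ij}|$) or $\sup_{D\times\mathbb{R}}\sum_j c_{ij}\le0$, then $v_i\ge0$ in $D\times\mathbb{R}$. This is proved by the substitution $v_i=\varphi(x_1)z_i$ with $\varphi>0$ on $[0,d]$ and $-\varphi''=\gamma^2\varphi$, $\gamma$ chosen so that $\gamma^2>\sup_i\sup_{D\times\mathbb{R}}\sum_j c_{ij}$ and $\gamma d<\pi/2$ (both possible under either alternative), which turns the system for $z$ into one with a first‑order drift and \emph{strictly negative} row sums; then, assuming $\inf_i\inf z_i=-M<0$, one translates in $(x',t)$ along a minimizing sequence and passes to a $C^{2,1}_{\mathrm{loc}}$ limit by interior parabolic estimates, obtaining a limiting system whose solution attains an interior negative minimum — impossible for a cooperative system with strictly negative row sums, as one sees by evaluating the equation at that minimum point. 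Applying this for small $\lambda$ gives $w_i^\lambda\ge0$ on $\Sigma_\lambda\times\mathbb{R}$ for $\lambda\in(0,\lambda_0]$, and one sets $\lambda^*=\sup\{\bar\lambda>0:\ w_i^\lambda\ge0\text{ in }\Sigma_\lambda\times\mathbb{R}\ \forall\lambda\in(0,\bar\lambda],\ \forall i\}\ge\lambda_0>0$.

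The core of the argument is then $\lambda^*=\infty$. Assume $\lambda^*<\infty$; by continuity $w_i^{\lambda^*}\ge0$, and by the strong maximum principle for cooperative parabolic systems together with $(H_4)$ one gets $w_i^{\lambda^*}>0$ in $\Sigma_{\lambda^*}\times\mathbb{R}$ for every $i$ — otherwise some $u_{i_0}$ would be symmetric about $\{x_1=\lambda^*\}$, forcing $u_{i_0}(2\lambda^*,x',t)=u_{i_0}(0,x',t)=0$ and contradicting positivity. Choose $\mu_n\downarrow\lambda^*$ with $w^{\mu_n}\not\ge0$ and near‑minimizers $(x_n,t_n)$; translating $U$ in $(x',t)$ and using parabolic estimates up to $\partial\mathbb{R}^N_+$ (valid since $f_i(0,\dots,0)=0$ and $U$ is bounded), pass to a limit $V$, a bounded nonnegative solution of (\ref{coop}). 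Since $w^\mu_U\ge0$ for $\mu<\lambda^*$, the limit satisfies $w^\mu_V\ge0$ for $\mu<\lambda^*$, hence $w^{\lambda^*}_V\ge0$; but the near‑minimizers force $w^{\lambda^*}_{V,i_0}<0$ at some interior point of $\Sigma_{\lambda^*}\times\mathbb{R}$ (the limit of the $x_{n,1}$ lies in $(0,\lambda^*)$, since $w^{\lambda^*}_V\ge0$ on $\{x_1=0\}$ and $=0$ on $\{x_1=\lambda^*\}$), a contradiction. The borderline case where the minima $-\delta_n$ of $w^{\mu_n}$ tend to $0$ must be handled separately, using the slab maximum principle of the previous paragraph in the thin strips $\{0<x_1<\eta\}$ and $\{\lambda^*-\eta<x_1<\mu_n\}$ and the uniform‑in‑$t$ strict positivity of $w^{\lambda^*}$ on $\{\eta\le x_1\le\lambda^*-\eta\}\times\mathbb{R}$ (itself obtained from the same compactness plus the strong maximum principle). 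This is the step I expect to be the main obstacle: the whole difficulty is that, for eternal solutions, positivity at every point of a non‑compact set does not yield a uniform positive lower bound, and the classical parabolic comparison principle is simply false; the compactness of time‑translates, the eigenfunction weight, and hypotheses $(H_3)$ (stability of the trivial state, used to initiate the scheme) and $(H_4)$ (used to exclude symmetric limits) are exactly what compensate for this.

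Once $w_i^\lambda\ge0$ in $\Sigma_\lambda\times\mathbb{R}$ for all $\lambda>0$, this reads $u_i(x_1',x',t)\ge u_i(x_1,x',t)$ whenever $0<x_1<x_1'$, so $\partial_{x_1}u_i\ge0$. Finally $v_i:=\partial_{x_1}u_i\ge0$ solves the cooperative linear system obtained by differentiating (\ref{coop}) in $x_1$; if some $v_{i_0}$ vanished at an interior point, the strong maximum principle would give $v_{i_0}\equiv0$ on $\mathbb{R}^N_+\times(-\infty,t_0]$, so $u_{i_0}$ would be independent of $x_1$ there and, vanishing on $\{x_1=0\}$, would vanish identically there, hence — by forward uniqueness and $(H_1)$ — everywhere, contradicting $(H_4)$. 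Therefore $\partial_{x_1}u_i>0$ in $\mathbb{R}^N_+\times\mathbb{R}$, which is (\ref{monotonex1}).
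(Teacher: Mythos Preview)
Your overall strategy---moving planes for eternal solutions, an eigenfunction weight in $x_1$, compactness of $(x',t)$-translates---matches the paper's, and your maximum principle for bounded eternal cooperative systems in slabs (proved via translation/compactness rather than the paper's device of inserting a factor $e^{\gamma t}$, applying the ordinary maximum principle on $(t_0,t)$, and sending $t_0\to-\infty$) is a legitimate alternative.

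However, the continuation step contains a genuine gap. Your dichotomy is on $\delta_n:=-\inf_i\inf w_i^{\mu_n}$, and in the borderline case $\delta_n\to 0$ you appeal to \emph{uniform-in-$(x',t)$} strict positivity of $w_i^{\lambda^*}$ on $\{\eta\le x_1\le\lambda^*-\eta\}\times\mathbb{R}$, claiming it follows from ``the same compactness plus the strong maximum principle''. This fails: $(x',t)$-translates of $U$ may converge to the \emph{trivial} solution $V\equiv 0$ (nothing in the hypotheses prevents $U$ from being arbitrarily small far away in $(x',t)$), and then $w^{\lambda^*}_V\equiv 0$, so neither the strong maximum principle nor $(H_4)$ produces a contradiction. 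Without that uniform lower bound you cannot guarantee $w^{\mu_n}\ge 0$ on $\{x_1=\eta\}$ and $\{x_1=\lambda^*-\eta\}$, and the narrow-strip maximum principle is inapplicable.

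The paper avoids this by a different dichotomy, based not on $\delta_n$ but on
\[
m_k:=\sup_{Z_k}\ \max_i\,\max\bigl\{u_i(x,t),\,u_i(2\lambda_k-x_1,x',t)\bigr\},
\]
where $Z_k$ is the negativity set of $w^{\lambda_k}$. If $m_k\ge\varepsilon_0>0$ along a subsequence, the limit along bad points is nontrivial, so $(H_4)$ gives positivity, the strong maximum principle forces the limiting reflection difference to be strictly positive, and a Hopf argument at $x_1=\lambda_0$ finishes. If $m_k\to 0$, then on $Z_k$ both $U$ and $U^{\lambda_k}$ are uniformly small, so by $(H_3)$ one has $\sup_{Z_k}\sum_j c_{ij}^{\lambda_k}\to 0$; the maximum principle---crucially in the form that only requires the differential inequality on the set where $w_i<0$---then yields $w^{\lambda_k}\ge 0$, contradicting $Z_k\ne\emptyset$. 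Thus $(H_3)$ is essential in the \emph{continuation} step, not (as you write) for initiating the scheme; small $\lambda$ is handled by the narrow-slab alternative alone, with no reference to $(H_3)$.

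One minor point: in your last paragraph, ``forward uniqueness and $(H_1)$'' do not force $u_{i_0}\equiv 0$ for all time, since $f_{i_0}(0,u_2,\dots,u_m)$ need not vanish when the other components are positive. But $u_{i_0}\equiv 0$ on $\mathbb{R}^N_+\times(-\infty,t_0]$ already contradicts $(H_4)$ directly, so the conclusion survives. The paper instead obtains $\partial_{x_1}u_i>0$ by applying Hopf's lemma to $V_\lambda u_i$ at $x_1=\lambda$, for each $\lambda>0$.
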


Theorem~\ref{general} is an analogue for systems of  \cite[Theorem 2.4 (c1)]{PQS07b} for scalar equations. The proof follows the idea in \cite{PQS07b} which is based on a moving plane technique. 
However, significant additional difficulties arise in the case of systems. This leads to the introduction of the assumption $(H_4)$ (which, in turn, is necessary for 
the conclusion (\ref{monotonex1})  to hold). Then, in order to deduce Theorem~\ref{Liouhalf} from Theorem~\ref{general}, we use an induction argument on the number $m$ of components.
\smallskip

The outline of the rest of the paper is as follows.  
In section 3, we give applications of our Liouville-type theorems, namely universal singularity estimates, including initial and final blowup estimates, as well as universal bounds for global solutions. Section~4 is then devoted to the proof of the Liouville-type Theorem~\ref{th1} in the whole space, and section~5 to the proofs of Theorems~\ref{Liouhalf} and \ref{general} in a half-space.  Finally, a version of the maximum principle for cooperative systems, suitable to our needs, is given in Appendix.

\section{Applications of Liouville-type results}

As a first application of Theorem \ref{th1}, we obtain universal singularity estimates
in time and space, including universal initial and final blowup estimates in the case~$\Omega={\mathbb R}^N$.

\begin{proposition}\label{c5th5}
  Let $r>0$ and assume $p:=2r+1<p_B(N)$ if $N\geq 3$. Let $B$ satisfy~(\ref{cooperativecond}).
  There exists a universal constant $C=C(N,p, B)>0$ such that,  for any domain $\Omega$ of ${\mathbb R}^N$
  and  any nonnegative classical solution $U$ of (\ref{c51}) in $\Omega\times(0,T)$,
there holds
\begin{align}\label{universal}
u_i(x,t)\leq C\left (t^{-1/(p-1)}+ (T-t)^{-1/(p-1)}+ \text{\rm dist}^{-2/(p-1)}(x,\partial \Omega) \right), \;
\end{align}
for all $(x,t)\in \Omega\times(0,T)$ and $i=1,...,m$.
In particular, in the case $\Omega={\mathbb R}^N$, we have
\begin{align}\label{universal2}
u_i(x,t)\leq C\left (t^{-1/(p-1)}+ (T-t)^{-1/(p-1)} \right).
\end{align}

\end{proposition}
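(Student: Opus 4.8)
The plan is to deduce the universal singularity estimate~(\ref{universal}) from the Liouville-type Theorem~\ref{th1} by a standard rescaling and doubling argument, in the spirit of the general method developed by Pol\'a\v{c}ik, Quittner and Souplet \cite{PQS07b}. First I would reduce the estimate to a statement about the rescaled quantity. Introduce the parabolic-type modulus $M(x,t):=\sum_{i=1}^m u_i^{(p-1)/2}(x,t)$ and suppose, for contradiction, that~(\ref{universal}) fails. Then there exist a sequence of domains $\Omega_k$, numbers $T_k>0$, nonnegative classical solutions $U_k$ of (\ref{c51}) on $\Omega_k\times(0,T_k)$, and points $(x_k,t_k)\in\Omega_k\times(0,T_k)$ such that
\begin{align*}
M_k(x_k,t_k)> k\Bigl(t_k^{-1/2}+(T_k-t_k)^{-1/2}+\mathrm{dist}^{-1}(x_k,\partial\Omega_k)\Bigr),
\end{align*}
where the exponent $1/2$ comes from raising the right-hand side of (\ref{universal}) to the power $(p-1)/2$. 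Here the scaling is dictated by the invariance of (\ref{c51}): if $U$ solves (\ref{c51}) then $U_\lambda(y,s):=\lambda^{2/(p-1)}U(\lambda y,\lambda^2 s)$ does too, so $M$ has the dimension of an inverse parabolic distance, which matches the three terms on the right-hand side (the ``distance to the parabolic boundary'' being comparable to $\min(\sqrt t,\sqrt{T-t},\mathrm{dist}(x,\partial\Omega))$).

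Next I would invoke a parabolic doubling lemma (the version in \cite[Lemma~5.1]{PQS07b}, applied with the function $M_k$ on the set $\Omega_k\times(0,T_k)$ equipped with the parabolic distance) to replace $(x_k,t_k)$ by a nearby point $(y_k,s_k)$ where $M_k$ is still large, say $M_k(y_k,s_k)=:2\lambda_k^{-1}\to\infty$, and where in addition $M_k$ is controlled on a whole parabolic cylinder: $M_k(y,s)\le 2M_k(y_k,s_k)$ for all $(y,s)$ with $|y-y_k|+|s-s_k|^{1/2}\le \lambda_k\, k$. Then set
\begin{align*}
V_k(z,\tau):=\lambda_k^{2/(p-1)}\,U_k\bigl(y_k+\lambda_k z,\ s_k+\lambda_k^2\tau\bigr).
\end{align*}
Each $V_k$ solves (\ref{c51}) on an expanding parabolic cylinder $B_{c k}\times(-c^2 k^2, c^2 k^2)$, satisfies $\widetilde M_k(0,0)=1$ (where $\widetilde M_k$ is the modulus of $V_k$) and $\widetilde M_k\le 2$ on that cylinder. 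By interior parabolic estimates (Schauder theory, using that the nonlinearities are smooth and the $V_k$ are uniformly bounded on compact sets), $V_k$ is relatively compact in $C^{2,1}_{\rm loc}(\mathbb R^N\times\mathbb R)$, and a subsequence converges to a nonnegative classical solution $V$ of (\ref{c51}) on all of $\mathbb R^N\times\mathbb R$ with $\widetilde M_V(0,0)=1$, hence $V\not\equiv 0$. This contradicts Theorem~\ref{th1} (which applies since $p<p_B(N)$), and~(\ref{universal}) follows. The estimate~(\ref{universal2}) is the special case $\Omega=\mathbb R^N$, where the distance term is absent.

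The main obstacle I anticipate is bookkeeping the escape of $(y_k,s_k)$ to the parabolic boundary rather than to the spatial boundary alone. One must check that after rescaling, the rescaled time interval and the rescaled domain both exhaust $\mathbb R$ and $\mathbb R^N$ respectively — equivalently, that $s_k/\lambda_k^2\to\infty$, $(T_k-s_k)/\lambda_k^2\to\infty$, and $\mathrm{dist}(y_k,\partial\Omega_k)/\lambda_k\to\infty$ — which is exactly what the doubling inequality guarantees once $M_k$ is the right scale-invariant quantity; this is why one must include all three terms $t^{-1/2}$, $(T-t)^{-1/2}$, $\mathrm{dist}^{-1}$ simultaneously inside a single modulus rather than treating them separately. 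A minor additional point is that the doubling lemma is usually stated for a single function, so one should either apply it to $M_k$ directly (noting $M_k$ is continuous, which suffices) or to $\max_i u_i^{(p-1)/2}$; either works. The compactness step is routine given the smoothness of the right-hand side of (\ref{c51}) and the local uniform bounds.
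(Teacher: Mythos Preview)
Your proposal is correct and follows exactly the route the paper indicates: the paper omits the proof, referring to the rescaling and doubling argument of \cite[Theorem~3.1]{PQS07b}, and your sketch fills in precisely those details. The only points worth adding are that the paper stresses the argument invokes Theorem~\ref{th1} only for \emph{bounded} solutions (your limit $V$ is indeed bounded since $\widetilde M_V\le 2$), so as to avoid circularity with the derivation of the unbounded case of Theorem~\ref{th1} from this very proposition; and that for $N\le 2$, where the hypothesis places no restriction on $p$, the Liouville input in the case $N=2$, $p\ge p_B(2)=8$ is Remark~\ref{rm1}(b) rather than Theorem~\ref{th1} as stated.
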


Note that Proposition~\ref{c5th5} covers in particular the Gross-Pitaevskii case $r=1$ in dimension $N=3$.
The proof is based on a reduction to Theorem~\ref{th1} by rescaling and doubling  arguments. 
Since it is completely similar to that in \cite[Theorem 3.1]{PQS07b} for the scalar case, it is therefore omitted.
We stress that the proof only requires Theorem~\ref{th1} for {\it bounded} solutions
(and Theorem~\ref{th1} in the general case is finally obtained as a consequence of Proposition~\ref{c5th5}). 

\begin{remark} \label{remsemitrivial}
(a) It is clear that if the positivity condition of the diagonal of $B$ is removed, then the universal singularity estimate (\ref{universal})
fails due to the existence of arbitrary large semi-trivial solutions. Indeed, if $\beta_{11}=0$, then for any $A>0$,
the constant function $U=(A,0,...,0)$ is a semi-trivial solution of problem (\ref{c51}) in any domain $\Omega\times (0,T)$.

\smallskip
(b) In the elliptic case, it is sometimes possible to prove universal estimates of {\bf positive} solutions in spite
of the existence of arbitrary large semitrivial solutions. This is for instance the case for 
Dirichlet problems associated with system (\ref{c52}) when $r=1$, $m=2$, $\beta_{11}=\beta_{22}=0$, $\beta_{12}>0$ and $N\le 3$
(see \cite[Theorems 1.1 and 6.1]{MSS14}). 
However the following counter-example shows that even this fails in the parabolic case:
\smallskip

\noindent 
Let $r\geq 1$ and assume that $\beta_{11}=0$. Let  $A>0$ be fixed. For any $\varepsilon\in (0,A)$, we denote $U_\varepsilon$ the maximal classical solution of system (\ref{c51}) in $B_1\times [0,T)$ with initial and boundary 
 value $U_\varepsilon=(A,\varepsilon,...,\varepsilon)$. Then $u_{\varepsilon,i}\ge \varepsilon>0$   by the maximum principle.
Also, $U_0=(A,0,...,0)$ is a global solution of system (\ref{c51}) in $B_1\times [0,\infty)$ 
with initial and boundary value $(A,0,...,0)$. Since the nonlinearity is Lipschitz ($r\geq 1$),  it follows from the continuous dependence of solutions with respect to initial and boundary data that, for $\varepsilon\in (0,\varepsilon_0(A))$ with $\varepsilon_0(A) >0$ small enough,  the solution $U_\varepsilon$ exists in $B_1\times [0,1]$. Since $A>0$ is arbitrarily large, the universal estimate (\ref{universal}) thus fails also for positive solutions.
\end{remark}

For the next application of our Liouville-type theorems, we consider the initial-boundary 
 value problem:
\begin{align}\label{c5BVP}
\begin{cases}
\displaystyle \frac{\partial u_i}{\partial t}-\Delta u_i+\lambda_i u_i 
&=\displaystyle\sum_{j=1}^{m}\beta_{ij} u_i^ru_j^{r+1},\qquad  x\in \Omega,\quad t\in (0,T),\quad i=1,...,m,\\
\noalign{\vskip 1mm}
u_i&=0, \qquad  x\in \partial\Omega,\quad t\in (0,T),\quad i=1,...,m,\\
u_i(x,0)&=u_{0,i}, \qquad  x\in\Omega,\quad i=1,...,m,
\end{cases}
\end{align}
where  $U_0=(u_{0,i})$ are non-negative functions in $(C_0(\Omega))^m$, $\lambda_i\in\mathbb R$ are constants, and $\Omega$ is a regular domain (possibly unbounded) of ${\mathbb R}^N$. It is well known (see, e.g., Amann~\cite{Ama85}) that problem (\ref{c5BVP})  has a unique (mild) solution 
$$U(t)\in C\bigl([0, T); (C_0(\Omega))^m\bigr)$$ 
with maximal existence time   $T=T_{\text{max}}(U_0)$, and $U$ is a classical solution for $t\in(0,T)$.  
 We have the following universal estimates for global solutions, as well as universal initial and final time blow-up rates.
Again, this covers in particular the case $r=1$ in dimension~$N=3$.

\begin{proposition}\label{c5th4}   Let $r\geq 1$ and assume either $N\le 2$ or $N=3$ and $p=2r+1<p_B(3)=15/4$. Let $B$ satisfy (\ref{cooperativecond}).
Let $U$ be a  nonnegative solution of (\ref{c5BVP}) in $\Omega\times (0,T)$. 
\smallskip

(i) If $T<\infty$ then there holds
\begin{align*}
u_i(x,t)\leq C\left (1+ t^{-1/(p-1)}+ (T-t)^{-1/(p-1)} \right),\quad x\in \Omega,\; 0<t<T,\quad i=1,...,m,
\end{align*}
where $C=C(\Omega,  p, B)$. 
\smallskip

(ii) If $U$ is global then there holds
\begin{align*}
u_i(x,t)\leq C\left (1+ t^{-1/(p-1)}\right),\quad x\in \Omega,\; t>0,\quad i=1,...,m, 
\end{align*}
where $C=C(\Omega, p, B)$. 
\end{proposition}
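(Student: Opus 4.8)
Here I sketch how I would prove Proposition~\ref{c5th4}.

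The plan is to obtain both estimates by the rescaling-and-doubling method, exactly along the lines of the scalar case in \cite{PQS07b} (and of Proposition~\ref{c5th5}), the new ingredients being the use of the system Liouville theorems (Theorems~\ref{th1} and~\ref{Liouhalf}) in the limit, together with the observation that the zero-order terms $\lambda_i u_i$ are scaling-subcritical and therefore disappear after rescaling. I describe part~(i); part~(ii) follows by deleting everywhere the terms involving $T-t$. Arguing by contradiction, suppose~(i) fails for some fixed regular domain $\Omega$. Then there exist nonnegative solutions $U_k$ of (\ref{c5BVP}) on $\Omega\times(0,T_k)$ with $T_k<\infty$ and points $(x_k,t_k)\in\Omega\times(0,T_k)$ such that, setting $M_k:=\bigl(\sum_{i=1}^m u_{k,i}\bigr)^{(p-1)/2}$,
\[
M_k(x_k,t_k)\ \geq\ k\,\bigl(1+t_k^{-1/2}+(T_k-t_k)^{-1/2}\bigr)\ \longrightarrow\ \infty .
\]
First I would apply the parabolic doubling lemma of \cite{PQS07b} to $M_k$ on $\Omega\times(0,T_k)$, with the excluded set consisting only of the initial and final time slices $\overline\Omega\times\{0,T_k\}$ (the lateral boundary being handled by a local flattening of $\partial\Omega$, which is what allows the relevant point to approach $\partial\Omega$ at the rescaling scale). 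Since $M_k(x_k,t_k)\min\bigl(\sqrt{t_k},\sqrt{T_k-t_k}\bigr)>k$, this produces points $(\tilde x_k,\tilde t_k)$ with $\tilde M_k:=M_k(\tilde x_k,\tilde t_k)\geq M_k(x_k,t_k)$, with $\tilde M_k^{2}\,\tilde t_k\to\infty$ and $\tilde M_k^{2}(T_k-\tilde t_k)\to\infty$, and with $M_k\leq 2\tilde M_k$ on a parabolic cylinder of radius $\sim R_k\tilde M_k^{-1}$ ($R_k\to\infty$) centered at $(\tilde x_k,\tilde t_k)$; since the components are nonnegative, each $u_{k,i}$ is then bounded there by a fixed multiple of $\tilde M_k^{2/(p-1)}$.

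Next I would rescale, setting $\lambda_k:=\tilde M_k^{-1}\to 0$ and
\[
V_k(y,s):=\lambda_k^{2/(p-1)}\,U_k\bigl(\tilde x_k+\lambda_k y,\ \tilde t_k+\lambda_k^{2}s\bigr),\qquad y\in\Omega_k:=\lambda_k^{-1}(\Omega-\tilde x_k),
\]
so that $\sum_i v_{k,i}(0,0)=1$, the $V_k$ are uniformly bounded on parabolic balls of radius $R_k\to\infty$, $V_k=0$ on $\partial\Omega_k$, and
\[
\partial_s v_{k,i}-\Delta v_{k,i}+\lambda_k^{2}\lambda_i\,v_{k,i}=\sum_{j=1}^m\beta_{ij}\,v_{k,i}^{\,r}v_{k,j}^{\,r+1},\qquad i=1,\dots,m,
\]
where the coefficients $\lambda_k^{2}\lambda_i\to 0$. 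Using the uniform bounds, interior parabolic $L^q$ and Schauder estimates together with their counterparts up to $\partial\Omega_k$ (after flattening), and compactness, I would extract a subsequence converging in $C^{2,1}_{\mathrm{loc}}$ to a nonnegative bounded limit $V=(v_i)$ with $\sum_i v_i(0,0)=1$, hence $V\not\equiv 0$. According to whether $d_k:=\tilde M_k\,\mathrm{dist}(\tilde x_k,\partial\Omega)$ tends to $\infty$ or remains bounded, $V$ solves either (\ref{c51}) on ${\mathbb R}^N\times{\mathbb R}$ --- which contradicts Theorem~\ref{th1}, applicable in the present range (for $N=3$, since $p=2r+1<15/4=p_B(3)$; for $N=2$, for all $r>0$ by Remark~\ref{rm1}(b); for $N=1$, since $p_B(1)=\infty$) --- or, after a rotation, (\ref{halfspace}) on ${\mathbb R}^N_+\times{\mathbb R}$ --- which contradicts Theorem~\ref{Liouhalf}, applicable since $r\geq 1$ and $N\leq 3$ (with $p<15/4$ when $N=3$). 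In both cases we reach a contradiction, which proves~(i); part~(ii) is obtained by the same argument with the terms involving $T-t$ suppressed, so that the rescaled time interval again fills~${\mathbb R}$.

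The step I expect to be the main obstacle is the boundary alternative $d_k\not\to\infty$: one must show that, after rescaling around points that may approach $\partial\Omega$, the domains $\Omega_k$ converge to a half-space and the $V_k$ stay uniformly bounded and regular up to $\partial\Omega_k$, so that the limit is a genuine nontrivial bounded solution of (\ref{halfspace}) to which Theorem~\ref{Liouhalf} applies. This boundary analysis is precisely where the hypothesis $r\geq 1$ (required by Theorem~\ref{Liouhalf}) and the dependence of the constant $C$ on the geometry of $\Omega$ enter; together with the constraint $p<p_B(N)$ of Theorem~\ref{th1} and the fact that $r\geq 1$ forces $p\geq 3$, it is also what restricts the admissible dimensions to $N\leq 2$, or $N=3$ with $p<15/4$. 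Since everything else is a routine transcription of the scalar arguments in \cite{PQS07b}, I would present in detail only these modifications and refer to that work for the rest.
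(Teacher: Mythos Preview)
Your proposal is correct and follows essentially the same approach as the paper: the paper simply states that the proof is ``completely similar to that of \cite[Theorem 4.1]{PQS07b}, based on rescaling and doubling arguments, and Liouville-type theorems,'' invoking Theorem~\ref{th1} for the whole-space limit and Theorem~\ref{Liouhalf} for the half-space limit. Your sketch supplies exactly this argument in more detail, including the (correct) observation that the linear terms $\lambda_i u_i$ vanish under rescaling; there is nothing to add.
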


The proof of Proposition~\ref{c5th4} is completely similar to that of \cite[Theorem 4.1]{PQS07b}, 
based on rescaling and doubling arguments, and Liouville-type theorems.
Namely, we use the Liouville-type Theorems~\ref{th1} in the whole space, and Theorem~\ref{Liouhalf} in a half-space.

\begin{remark} 
The upper restriction on $N$ in Proposition~\ref{c5th4} is a consequence of the condition $r\geq 1$,
required by Theorem~\ref{Liouhalf}.
However, if we consider only radial solutions in a symmetric domain $\Omega$ (i.e. the whole space ${\mathbb R}^N$, a ball, an annulus, or the complement of a ball), then Proposition~\ref{c5th4} is true under the weaker assumptions $r>0$, $N\ge 1$ and $p<p_S$ (see \cite[Section 2]{Pha14}).
\end{remark}

\section{Proof of Theorem~\ref{th1}}
For the sake of simplicity, we denote by $\int$ the integral  $ \int_{B_{1}}\int_{-1}^{1}dxdt$. 
The key step is the following Lemma.

\begin{lemma}\label{c5lem1}
 Assume that $p<p_B(N)$ and $B$ satisfies (\ref{cooperativecond}). Let $0\leq \varphi\in {\mathcal D}(B_1\times (-1, 1))$ and $U$ be a
 positive classical  solution of (\ref{c51}) in $B_1\times (-1, 1)$. Denote 
\begin{align*}
 &I_i=\int\varphi u_i^{-2}|\nabla u_i|^4, \qquad I=\sum_{i=1}^{m}I_i,\\  
&L_i=\int \varphi \Big(\sum_{j=1}^{m}\beta_{ij} u_i^ru_j^{r+1}\Big)^2, \qquad L=\sum_{i=1}^{m}L_i.
\end{align*}
Then there holds
\begin{align}
 I+L\leq &\ C\sum_{i=1}^m\int\varphi \Big(|\partial_tu_i| u_i^{-1}|\nabla u_i|^2+ |\partial_tu_i|^2\Big)+C\sum_{i=1}^m\int |\Delta\varphi||\nabla u_i|^2\notag\\
&+C\sum_{i=1}^m\int \Big(\sum_{j=1}^{m}\beta_{ij} u_i^ru_j^{r+1}+|\partial_tu_i|+u_i^{-1}|\nabla u_i|^2\Big)|\nabla\varphi.\nabla u_i|\notag\\
&+C\sum_{i,j=1}^m\int |\varphi_t|\beta_{ij}u_i^{r+1}u_j^{r+1},\label{c5aa3}
\end{align}
where $C=C(N,p, B)$. 
\end{lemma}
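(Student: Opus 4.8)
The plan is to follow the Gidas--Spruck / Bidaut-V\'eron strategy: multiply the equation for $u_i$ by suitable test functions built from $\varphi$ and powers of $u_i$ and its derivatives, integrate by parts, and combine the resulting identities so that the "good" quantities $I_i=\int\varphi u_i^{-2}|\nabla u_i|^4$ and $L_i=\int\varphi(\sum_j\beta_{ij}u_i^ru_j^{r+1})^2$ emerge on the left, with all other terms controlled by the right-hand side of (\ref{c5aa3}). Since $U$ is positive on $B_1\times(-1,1)$, all negative powers of $u_i$ are legitimate. First I would apply the Bochner-type formula to each $u_i$: writing $w_i=|\nabla u_i|^2$, one has $\frac12(\partial_t-\Delta)w_i = -|D^2u_i|^2 - \nabla u_i\cdot\nabla(\partial_t u_i - \Delta u_i) = -|D^2u_i|^2 - \nabla u_i\cdot\nabla f_i$, where $f_i=\sum_j\beta_{ij}u_i^ru_j^{r+1}$. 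Testing this against $\varphi u_i^{-1}$ (or a comparable weight) and integrating by parts in both $x$ and $t$, the term $\int\varphi u_i^{-1}|D^2 u_i|^2$ combined with the cross terms coming from $\Delta(u_i^{-1})=2u_i^{-3}|\nabla u_i|^2-u_i^{-2}\Delta u_i$ produces, after a Cauchy--Schwarz / Young rearrangement, a lower bound involving $I_i$ — this is exactly where the condition $p<p_B(N)$ enters, guaranteeing that the algebraic coefficients in the quadratic form in $(u_i^{-1}|D^2u_i|^2,\ u_i^{-2}|\nabla u_i|^4)$ have the right sign so that a positive multiple of $I_i$ can be absorbed on the left.

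Next I would handle the gradient-nonlinearity coupling $\int\varphi u_i^{-1}\nabla u_i\cdot\nabla f_i$. Integrating by parts moves the derivative off $f_i$, giving terms of the form $\int f_i\,\mathrm{div}(\varphi u_i^{-1}\nabla u_i)$, i.e.\ combinations of $\int\varphi u_i^{-1}f_i\Delta u_i$, $\int\varphi u_i^{-2}f_i|\nabla u_i|^2$, and $\int f_i u_i^{-1}\nabla\varphi\cdot\nabla u_i$. Substituting $\Delta u_i=\partial_t u_i-f_i$ in the first of these produces the key term $-\int\varphi u_i^{-1}f_i^2$, which, together with a lower bound $f_i\ge \beta_{ii}u_i^{p}\ge 0$ and the cooperativity (\ref{cooperativecond}), yields a positive multiple of $L_i$ up to cross terms of the schematic type $\int\varphi u_i^{r-1}u_j^{r+1}f_i$; these mixed terms are dominated by $\varepsilon L + C_\varepsilon L$-type bounds using Young's inequality together with the symmetry of $B$ (here the fact that $\beta_{ij}=\beta_{ji}$ lets one symmetrize the double sum so that no genuinely new quantity appears). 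The remaining contributions — those involving $\partial_t u_i$, one derivative of $\varphi$, or $\Delta\varphi$ — are collected by repeated use of Young's inequality with small parameter $\varepsilon$ to absorb any $I_i$ or $L_i$ factor back into the left side, leaving precisely the five error integrals displayed on the right of (\ref{c5aa3}) with a constant $C=C(N,p,B)$.

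The main obstacle, and where real work beyond the scalar case of \cite{BV98} is needed, is bookkeeping the coupling terms $\int\varphi u_i^{r-1}u_j^{r+1}(\text{stuff})$ that arise when one differentiates or squares $f_i=\sum_j\beta_{ij}u_i^ru_j^{r+1}$: unlike the scalar equation, one cannot simply rewrite powers of $f_i$ in terms of a single $u$, so one must carefully track all index pairs $(i,j)$, exploit $\beta_{ij}\ge0$ for $i\ne j$ and $\beta_{ii}>0$ to keep signs, use the elementary inequality $u_i^ru_j^{r+1}\le \tfrac12(u_i^{2r+1}+u_j^{2r+1})$ (or a weighted version) to reduce mixed monomials to diagonal ones controlled by $\sum_i L_i$, and use the symmetry of $B$ to fold the off-diagonal double sums into a manageable form. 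A secondary technical point is that the Bochner computation must be done in the parabolic setting, so the time derivative generates extra terms $\int\varphi_t(\cdots)$ and $\int\varphi u_i^{-1}\partial_t w_i$ that, after integration by parts in $t$, give rise precisely to the $|\partial_t u_i|$-weighted and $|\varphi_t|$-weighted integrals on the right-hand side — one must verify that the "missing" good term (a time-integral analogue of $I_i$) is not needed, i.e.\ that all parabolic error terms can indeed be thrown to the right without loss. Once these estimates are assembled, summing over $i=1,\dots,m$ gives (\ref{c5aa3}).
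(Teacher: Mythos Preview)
Your overall strategy matches the paper's, but the key system-specific step is not the one you describe. After substituting the PDE into $-J=\sum_i\int\varphi u_i^{-1}|\nabla u_i|^2(-\Delta u_i)$ and integrating by parts once more, the term that must be controlled is the \emph{cross-gradient} integral $\int\varphi\, u_i^{r}u_j^{r}\,\nabla u_i\cdot\nabla u_j$; this cannot be handled by your inequality $u_i^{r}u_j^{r+1}\le\tfrac12(u_i^{p}+u_j^{p})$, which acts on the zero-order monomials and would destroy the sharp constant. The paper's device is to apply Young's inequality to the gradients themselves,
\[
2u_i^{r}u_j^{r}\,\nabla u_i\cdot\nabla u_j\ \le\ u_i^{r-1}u_j^{r+1}|\nabla u_i|^2+u_i^{r+1}u_j^{r-1}|\nabla u_j|^2,
\]
and then use the symmetry $\beta_{ij}=\beta_{ji}$ so that the $\beta$-weighted double sum over $(i,j)$ folds back into the original quantity. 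This recovers the exact constant $1/p$ in front of $L$ in the lower bound for $-J$, and combined with the $K$-contribution yields the net coefficient $\frac{N+2}{pN}(-k)-\frac{N-1}{N}$ on $L$, positive precisely when $p<p_B(N)$ and $k$ is chosen near $-N/(N-1)$.

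There is also a weight mismatch in your scheme: testing the Bochner identity for $|\nabla u_i|^2$ against $\varphi u_i^{-1}$ produces $\int\varphi u_i^{-1}f_i^{2}$ and $\int\varphi u_i^{-3}|\nabla u_i|^{4}$, not $L_i=\int\varphi f_i^{2}$ and $I_i=\int\varphi u_i^{-2}|\nabla u_i|^{4}$, and the bound $f_i\ge\beta_{ii}u_i^{p}$ does not remove the extra factor $u_i^{-1}$. The paper avoids this by invoking the refined Bochner-type inequality of \cite[Lemma~8.9]{QS07} with $q=0$ and a free parameter $k\ne-1$, which directly gives $K_i=\int\varphi(\Delta u_i)^{2}$ with no $u_i$-weight; substituting $\Delta u_i=\partial_t u_i-f_i$ then produces $L_i$ exactly.
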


 \begin{proof} {\bf Step 1.} {\it Preparations.} 
 Denote
\begin{align*}
 &J_i:=\int \varphi u_i^{-1}|\nabla u_i|^2\Delta u_i, \qquad J=\sum_{i=1}^{m}J_i,\\  
 &K_i:=\int \varphi (\Delta u_i)^2,  \qquad K=\sum_{i=1}^{m}K_i.
\end{align*}
Applying \cite[Lemma 8.9]{QS07} with $q=0$, $-1\ne k<0$, we have
\begin{align*}
 -&\Big(\frac{N-1}{N}k+1\Big)kI_i+\frac{N+2}{N}kJ_i-\frac{N-1}{N}K_i\notag\\
 &\quad\leq \frac{1}{2}\int |\nabla u_i|^2\Delta\varphi+\int\bigl(\Delta u_i-ku_i^{-1}|\nabla u_i|^2\bigr)\nabla u_i.\nabla\varphi.
\end{align*}
(We stress that this is true if $u_i$ is any positive $C^{2,1}$ function, with no reference to the PDE system (\ref{c51}) at this point.) 
Therefore, 
\begin{align}\label{c5a111}
 &-\Big(\frac{N-1}{N}k+1\Big)kI+\frac{N+2}{N}kJ-\frac{N-1}{N}K\notag\\
 &\leq \sum_{i=1}^{m}\int \bigg(\frac{1}{2}|\nabla u_i|^2\Delta\varphi+\bigl(\Delta u_i-ku_i^{-1}|\nabla u_i|^2\bigr)\nabla u_i.\nabla\varphi\bigg).
\end{align}
\smallskip
{\bf Step 2.} {\it Estimate of $J$ and $K$.}
We claim that
\begin{align}\label{J}
-J\geq&\ \frac{1}{p}\sum_{i=1}^m\int\varphi \Big(\sum_{j=1}^m\beta_{ij} u_i^ru_j^{r+1}\Big)^2+\frac{1}{2p(r+1)}\int \varphi_t\sum_{i,j=1}^m\beta_{ij}u_i^{r+1}u_j^{r+1}\notag\\
&-\frac{1}{p}\int \sum_{i,j=1}^m\beta_{ij} u_i^ru_j^{r+1}|\nabla\varphi.\nabla u_i|-\int\varphi \sum_{i=1}^m|\partial_tu_i| u_i^{-1}|\nabla u_i|^2 
\end{align}
and
\begin{align}\label{K}
 K=\sum_{i=1}^{m}\int \varphi \Big(\sum_{j=1}^m\beta_{ij} u_i^ru_j^{r+1}\Big)^2+\sum_{i=1}^{m}\int \varphi |\partial_t u_i|^2
 +\frac{1}{r+1}\sum_{i,j=1}^m\int \varphi_t\beta_{ij}u_i^{r+1}u_j^{r+1}.
\end{align}

Let us first establish (\ref{J}). Using integration by parts, we have  
\begin{align}
\int\varphi|\nabla u_i|^2u_i^{r-1}u_j^{r+1}&=\int\varphi u_j^{r+1}\nabla u_i.\nabla\left(\frac{u_i^r}{r}\right)\notag\\
&=-\frac{1}{r}\int\varphi u_i^{r}u_j^{r+1}\Delta u_i-\frac{1}{r}\int u_i^ru_j^{r+1}\nabla \varphi.\nabla u_i\notag\\
&\;\quad-\frac{r+1}{r}\int \varphi u_i^{r}u_j^r \nabla u_i.\nabla u_j.    \label{p1}
\end{align}
Using the Young inequality $2u_i^{r}u_j^r \nabla u_i.\nabla u_j\leq |\nabla u_i|^2u_i^{r-1}u_j^{r+1}+|\nabla u_j|^2u_i^{r+1}u_j^{r-1}$, it follows from (\ref{p1}) that
\begin{align*}
\int\varphi|\nabla u_i|^2u_i^{r-1}u_j^{r+1}&\geq 
-\frac{1}{r}\int\varphi u_i^{r}u_j^{r+1}\Delta u_i-\frac{1}{r}\int u_i^ru_j^{r+1}\nabla \varphi.\nabla u_i\\
&\quad-\frac{r+1}{2r}\int \varphi \big(|\nabla u_i|^2u_i^{r-1}u_j^{r+1}+|\nabla u_j|^2u_i^{r+1}u_j^{r-1}\big).
\end{align*}
Consequently, 
\begin{align*}
\sum_{i,j=1}^m\beta_{ij}\int\varphi|\nabla u_i|^2u_i^{r-1}u_j^{r+1}&\geq 
-\frac{1}{r}\sum_{i,j=1}^m\int\varphi \beta_{ij}u_i^{r}u_j^{r+1}\Delta u_i-\frac{1}{r}\sum_{i,j=1}^m\int \beta_{ij}u_i^ru_j^{r+1}\nabla \varphi.\nabla u_i\\
&\quad-\frac{r+1}{2r}\sum_{i,j=1}^m\beta_{ij}\int \varphi \big(|\nabla u_i|^2u_i^{r-1}u_j^{r+1}+|\nabla u_j|^2u_i^{r+1}u_j^{r-1}\big),
\end{align*}
hence
\begin{align*}
\sum_{i,j=1}^m\beta_{ij}\int\varphi|\nabla u_i|^2u_i^{r-1}u_j^{r+1}&\geq 
-\frac{1}{r}\sum_{i,j=1}^m\int\varphi \beta_{ij}u_i^{r}u_j^{r+1}\Delta u_i-\frac{1}{r}\sum_{i,j=1}^m\int \beta_{ij}u_i^ru_j^{r+1}\nabla \varphi.\nabla u_i\\
&\quad-\frac{r+1}{r}\sum_{i,j=1}^m\beta_{ij}\int \varphi |\nabla u_i|^2u_i^{r-1}u_j^{r+1},
\end{align*}
owing to the symmetry property $\beta_{ij}=\beta_{ji}$.
Consequently, recalling $p=2r+1$, we obtain 
\begin{align}\label{p3}
\sum_{i,j=1}^m\beta_{ij}\int\varphi|\nabla u_i|^2u_i^{r-1}u_j^{r+1}\geq 
-\frac{1}{p}\int\sum_{i,j=1}^m\varphi\beta_{ij} u_i^{r}u_j^{r+1}\Delta u_i-\frac{1}{p}\int\sum_{i,j=1}^m\beta_{ij} u_i^ru_j^{r+1}\nabla \varphi.\nabla u_i.
\end{align}
Now substituting the PDE (\ref{c51}), written as
\begin{equation}\label{p3b}
-\Delta u_i=\sum_{j=1}^m\beta_{ij} u_i^ru_j^{r+1}-\partial_tu_i,
\end{equation}
in the definition of $J$, it follows from (\ref{p3}) that
\begin{align*}
-J=&\sum_{i,j=1}^m\int\varphi |\nabla u_i|^2\beta_{ij} u_i^{r-1}u_j^{r+1}-\sum_{i=1}^m\int\varphi u_i^{-1}|\nabla u_i|^2\partial_tu_i \\ 
\geq&\  \frac{1}{p}\sum_{i=1}^m\int\varphi \Big(\sum_{j=1}^m\beta_{ij} u_i^ru_j^{r+1}\Big)(-\Delta u_i)-\frac{1}{p}\sum_{i,j=1}^m\int \beta_{ij} u_i^ru_j^{r+1}|\nabla\varphi.\nabla u_i|\\
&-\sum_{i=1}^m\int\varphi |\partial_tu_i| u_i^{-1}|\nabla u_i|^2
\end{align*}
and then, that
\begin{align}\label{p4}
-J\geq&\ \frac{1}{p}\sum_{i=1}^m\int\varphi \Big(\sum_{j=1}^m\beta_{ij} u_i^ru_j^{r+1}\Big)^2
- \frac{1}{p}\int\varphi \sum_{i,j=1}^m\beta_{ij} u_i^ru_j^{r+1}\partial_tu_i\\
&-\frac{1}{p}\sum_{i,j=1}^m\int \beta_{ij} u_i^ru_j^{r+1}|\nabla\varphi.\nabla u_i|-\sum_{i=1}^m\int\varphi |\partial_tu_i| u_i^{-1}|\nabla u_i|^2.\notag
\end{align}
Next observe that, due to $\beta_{ij}=\beta_{ji}$, we have 
\begin{align}\label{IPP}
\partial_t\Big(\sum_{i,j=1}^m\beta_{ij} u_i^{r+1}u_j^{r+1}\Big)
&=(r+1)\sum_{i,j=1}^m\beta_{ij} u_i^ru_j^{r+1}\partial_tu_i
+(r+1)\sum_{i,j=1}^m\beta_{ij} u_i^{r+1}u_j^r\partial_tu_j \notag \\
&=2(r+1)\sum_{i,j=1}^m\beta_{ij} u_i^ru_j^{r+1}\partial_tu_i.
\end{align}
Combining (\ref{p4}) and (\ref{IPP}) and integrating by parts in $t$, we obtain
\begin{align*}
-J\geq&\ \frac{1}{p}\sum_{i=1}^m\int\varphi \Big(\sum_{j=1}^m\beta_{ij} u_i^ru_j^{r+1}\Big)^2+\frac{1}{2p(r+1)}\int \varphi_t\sum_{i,j=1}^m\beta_{ij}u_i^{r+1}u_j^{r+1}\notag\\ 
&-\frac{1}{p}\sum_{i,j=1}^m\int \beta_{ij} u_i^ru_j^{r+1}|\nabla\varphi.\nabla u_i|-\sum_{i=1}^m\int\varphi |\partial_tu_i| u_i^{-1}|\nabla u_i|^2 
\end{align*}
i.e., estimate (\ref{J}).

Next consider $K$. By substituting (\ref{p3b}) in the definition of $K$, we have
\begin{align*}
 K= \sum_{i=1}^{m}\int \varphi \Big(\sum_{j=1}^m\beta_{ij} u_i^ru_j^{r+1}\Big)^2+\sum_{i=1}^{m}\int \varphi |\partial_t u_i|^2
 -2\sum_{i,j=1}^m\int \varphi\beta_{ij}u_i^{r}u_j^{r+1}\partial_tu_i.
\end{align*}
Using (\ref{IPP}) again and integrating by parts in $t$, we obtain (\ref{K}).

\smallskip
{\bf Step 3.} {\it Conclusion.}
It follows from  (\ref{J}) and (\ref{K}) that, for any $-1\ne k<0$, 
\begin{align}
 \frac{N+2}{N}kJ-\frac{N-1}{N}K\geq &\ \Big(\frac{N+2}{pN}(-k)-\frac{N-1}{N}\Big)\sum_{i=1}^m\int\varphi\Big(\sum_{j=1}^m\beta_{ij} u_i^ru_j^{r+1}\Big)^2\notag\\ 
&-C\sum_{i,j=1}^m\int |\varphi_t|\beta_{ij}u_i^{r+1}u_j^{r+1}-C\sum_{i,j=1}^m\int \beta_{ij} u_i^ru_j^{r+1}|\nabla\varphi.\nabla u_i|.\notag\\
  &-C\sum_{i=1}^m\int\varphi |\partial_tu_i| u_i^{-1}|\nabla u_i|^2-C\sum_{i=1}^m\int\varphi |\partial_tu_i|^2 . \label{estimate2}
\end{align}
Since $p<p_B(N)$, we can take $k>-N/(N-1)$ close to $-N/(N-1)$ such that  
$$\Big(\frac{N-1}{N}k+1\Big)(-k)>0 \qquad \text{ and }\quad  \frac{N+2}{pN}(-k)-\frac{N-1}{N}>0.$$
(If $N=1$ then we take any $-1\ne k<0$.) The lemma follows from (\ref{c5a111}) and (\ref{estimate2}).
\end{proof}
\medskip

\begin{lemma}\label{c5lem2}
Assume that $p<p_B(N)$ and $B$ satisfies (\ref{cooperativecond}). Let $U$ be a positive classical solution of  (\ref{c51}) in $B_1\times (-1, 1)$. Then 
\begin{align} \label{c5bound}
 \int_{B_{1/2}}\int_{-1/2}^{1/2}\sum_{i=1}^{m} 
 \Big(\sum_{j=1}^{m}\beta_{ij} u_i^{r}u_j^{r+1}\Big)^2dxdt\leq C(N, p, B).
\end{align}
\end{lemma}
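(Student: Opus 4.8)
The plan is to derive (\ref{c5bound}) from the integral inequality (\ref{c5aa3}) of Lemma~\ref{c5lem1} by an absorption argument, after choosing a good test function and establishing a couple of auxiliary energy estimates. Fix a cutoff $\xi\in\mathcal D(B_1\times(-1,1))$ with $0\le\xi\le1$ and $\xi\equiv1$ on $B_{1/2}\times(-1/2,1/2)$, and apply Lemma~\ref{c5lem1} with $\varphi=\xi^\lambda$, where $\lambda=\lambda(p)$ is large and will be fixed at the end; then $|\nabla\varphi|\le C\xi^{\lambda-1}$, $|\Delta\varphi|\le C\xi^{\lambda-2}$ and $|\varphi_t|\le C\xi^{\lambda-1}$, so that all cutoff-derivative factors are not only bounded but controlled by positive powers of $\xi$. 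I record two elementary facts, writing $g_i=\sum_{j=1}^m\beta_{ij}u_i^ru_j^{r+1}$. First, since $\beta_{ii}>0$, dropping off-diagonal terms gives $g_i\ge\beta_{ii}u_i^p$, hence $g_i^2\ge\beta_{ii}^2u_i^{2p}$, while Cauchy--Schwarz together with Young's inequality (with the exponents $2r$ and $2r+2$, summing to $2p$) gives $g_i^2\le C\sum_j(u_i^{2p}+u_j^{2p})$; thus $L$ is comparable to $\sum_i\int\varphi u_i^{2p}$, with constants depending only on $m,B$. Second, since $U$ is smooth and positive on a neighborhood of $\mathrm{supp}\,\varphi$, all integrals below are finite, and for any exponent $q<2p$ one has, by Young's inequality applied to $\xi^\mu u_i^q=(\xi^\lambda u_i^{2p})^{q/(2p)}\xi^{\mu-\lambda q/(2p)}$, the subcritical bound $\int\xi^\mu u_i^q\le\epsilon L+C_\epsilon$, \emph{provided} $\lambda$ is large enough (depending on $p$) for the residual power $\mu-\lambda q/(2p)$ to stay nonnegative for the finitely many pairs $(\mu,q)$ that will occur.

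Next I would establish two auxiliary estimates by testing (\ref{c51}), written as $\partial_tu_i-\Delta u_i=g_i$, against suitable multipliers and integrating by parts, using the identity (\ref{IPP}) and the fact that time-boundary terms vanish since $\xi$ has compact support. Testing by $\xi^\mu u_i$, summing over $i$, and integrating by parts in $x$ and $t$ yields
\[
\sum_i\int\xi^\mu|\nabla u_i|^2=\sum_{i,j}\int\xi^\mu\beta_{ij}u_i^{r+1}u_j^{r+1}-\tfrac12\sum_i\int\partial_t(\xi^\mu)\,u_i^2-\sum_i\int u_i\,\nabla(\xi^\mu)\cdot\nabla u_i;
\]
the last term is absorbed via Young's inequality by $\epsilon\sum_i\int\xi^\mu|\nabla u_i|^2+C_\epsilon\sum_i\int\xi^{\mu-2}u_i^2$, and all remaining terms are subcritical in $u$ (note $2(r+1)=p+1<2p$), giving the gradient estimate $\sum_i\int\xi^\mu|\nabla u_i|^2\le\epsilon L+C_\epsilon$ for the relevant values (e.g.\ $\mu\in\{\lambda-1,\lambda-2\}$), once $\lambda$ is large. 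Testing instead by $\xi^\lambda\partial_tu_i$, summing, integrating by parts, and using (\ref{IPP}) to rewrite $\sum_i\int\xi^\lambda g_i\,\partial_tu_i=-\frac{1}{2(r+1)}\int\partial_t(\xi^\lambda)\sum_{i,j}\beta_{ij}u_i^{r+1}u_j^{r+1}$ yields
\[
\sum_i\int\xi^\lambda|\partial_tu_i|^2=\tfrac12\sum_i\int\partial_t(\xi^\lambda)\,|\nabla u_i|^2-\sum_i\int\partial_tu_i\,\nabla(\xi^\lambda)\cdot\nabla u_i-\tfrac{1}{2(r+1)}\int\partial_t(\xi^\lambda)\sum_{i,j}\beta_{ij}u_i^{r+1}u_j^{r+1};
\]
absorbing the middle term by $\epsilon\sum_i\int\xi^\lambda|\partial_tu_i|^2+C_\epsilon\sum_i\int\xi^{\lambda-2}|\nabla u_i|^2$ and then invoking the gradient estimate together with the subcritical bound, one obtains the time-derivative estimate $\sum_i\int\xi^\lambda|\partial_tu_i|^2\le\epsilon L+C_\epsilon$.

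I would then return to (\ref{c5aa3}) and bound each of its right-hand side terms by $\epsilon(I+L)+C_\epsilon$. By Young's inequality, $\sum_i\int\varphi|\partial_tu_i|\,u_i^{-1}|\nabla u_i|^2\le\epsilon I+C_\epsilon\sum_i\int\varphi|\partial_tu_i|^2$, so this term together with the pure term $\sum_i\int\varphi|\partial_tu_i|^2$ is handled by the time-derivative estimate; the term $\sum_i\int|\Delta\varphi||\nabla u_i|^2\le C\sum_i\int\xi^{\lambda-2}|\nabla u_i|^2$ is handled by the gradient estimate, and $\sum_{i,j}\int|\varphi_t|\beta_{ij}u_i^{r+1}u_j^{r+1}\le C\sum_{i,j}\int\xi^{\lambda-1}u_i^{r+1}u_j^{r+1}$ by the subcritical bound. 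For the terms carrying $|\nabla\varphi\cdot\nabla u_i|\le C\xi^{\lambda-1}|\nabla u_i|$, Young's inequality sends $\sum_i\int g_i|\nabla\varphi\cdot\nabla u_i|$ into $\epsilon L+C_\epsilon\sum_i\int\xi^{\lambda-2}|\nabla u_i|^2$, sends $\sum_i\int|\partial_tu_i||\nabla\varphi\cdot\nabla u_i|$ into $\epsilon\sum_i\int\varphi|\partial_tu_i|^2+C_\epsilon\sum_i\int\xi^{\lambda-2}|\nabla u_i|^2$, and sends the cubic term $\sum_i\int u_i^{-1}|\nabla u_i|^2|\nabla\varphi\cdot\nabla u_i|\le C\sum_i\int\xi^{\lambda-1}u_i^{-1}|\nabla u_i|^3$ into $\epsilon I+C_\epsilon\sum_i\int\xi^{\lambda-4}u_i^2$ — the residuals being controlled, in turn, by the gradient, time-derivative and subcritical estimates. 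Collecting everything, (\ref{c5aa3}) becomes $I+L\le\epsilon(I+L)+C_\epsilon$; fixing $\lambda=\lambda(p)$ large enough that every residual power of $\xi$ above stays nonnegative, then choosing $\epsilon=1/2$, we get $I+L\le C(N,p,B)$. Since $\varphi=\xi^\lambda\equiv1$ on $B_{1/2}\times(-1/2,1/2)$ and all integrands are nonnegative, the left-hand side of (\ref{c5bound}) is at most $L\le C$, which proves the lemma.

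The main obstacle, as in the scalar case treated in \cite{BV98}, is the bookkeeping of the powers of $\xi$: one must apply the auxiliary estimates in the right order (the gradient estimate feeds the time-derivative estimate, and both feed the treatment of (\ref{c5aa3})), and check that at every Young splitting the part \emph{not} absorbed into $\epsilon(I+L)$ is simultaneously subcritical in $u$ and carries a nonnegative power of $\xi$; this is what forces a single choice of a sufficiently large $\lambda=\lambda(p)$. The cubic gradient term $\sum_i\int u_i^{-1}|\nabla u_i|^2|\nabla\varphi\cdot\nabla u_i|$ and the mixed term $\sum_i\int\varphi|\partial_tu_i|\,u_i^{-1}|\nabla u_i|^2$ are the most delicate, since they must be split so that the leading contribution lands exactly on $\epsilon I$ while the remainder drops to a strictly subcritical power of $u$.
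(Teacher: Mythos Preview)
Your argument is correct and follows the same overall absorption strategy as the paper: apply Lemma~\ref{c5lem1} with a well-chosen cutoff, control the time-derivative term via the energy identity (your computation using (\ref{IPP}) is exactly the paper's derivation of (\ref{timederivative})), and absorb the right-hand side of (\ref{c5aa3}) into $\epsilon(I+L)+C_\epsilon$.

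The one noteworthy difference is in how the gradient terms are handled. The paper chooses $\varphi$ with the precise pointwise controls $|\nabla\varphi|\le C\varphi^{(3p+1)/4p}$, $|\Delta\varphi|\le C\varphi^{(p+1)/2p}$, $|\varphi_t|\le C\varphi^{(p+1)/2p}$, and then imports two inequalities from \cite[Proposition~21.5]{QS07} which, by Young's inequality against these powers, absorb quantities like $\int|\nabla u_i|^2|\Delta\varphi|$ \emph{directly} into $\epsilon\int\varphi(u_i^{-2}|\nabla u_i|^4+u_i^{2p})\le \epsilon(I+L)$, without ever estimating $\int|\nabla u_i|^2$ on its own. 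You instead take $\varphi=\xi^\lambda$ and insert an intermediate step: a separate $L^2$ gradient bound $\sum_i\int\xi^\mu|\nabla u_i|^2\le\epsilon L+C_\epsilon$ obtained by testing the PDE against $\xi^\mu u_i$. Both routes work; the paper's is shorter but leans on an external reference, while yours is fully self-contained at the cost of one extra (elementary) estimate and the bookkeeping of the exponent~$\lambda$. Your tracking of the powers of $\xi$ is accurate --- the worst residual is $\xi^{\lambda-4}u_i^2$ from the cubic gradient term, and any $\lambda>4p/(p-1)$ suffices.
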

\begin{proof}

We  follow the argument as in the proof of \cite[Proposition 21.5]{QS07}.
One can choose the test-function $\varphi$ such that $\varphi=1$ in $B_{1/2}\times (-1/2,1/2)$, $0\leq \varphi\leq 1$ and 
\begin{equation} \label{PropTest}
 |\nabla\varphi|\leq C \varphi^{(3p+1)/4p}, \quad |\Delta\varphi|\leq C \varphi^{(p+1)/2p}, \quad |\varphi_t|\leq C\varphi^{(3p+1)/4p}\leq C\varphi^{(p+1)/2p}.
\end{equation}
Recall the notation $\int=\int_{B_{1}}\int_{-1}^{1}dxdt$. By the proof of \cite[Proposition 21.5]{QS07} (see formulae (21.10) and (21.11)),
for any $\varepsilon>0$ and any positive function $u\in C^{2,1}(B_1\times (-1,1))$ , we have 
$$
\int|\nabla u|^2(|\Delta \varphi|+\varphi^{-1}|\nabla\varphi|^2+|\varphi_t|)
\leq \varepsilon\int\varphi \bigl(u^{-2}|\nabla u|^4+u^{2p}\bigr)+C(\varepsilon)
$$
and
\begin{align*}
&\int\Big(\varphi|u_t|u^{-1}|\nabla u|^2+\big(u^p+|u_t|+u^{-1}|\nabla u|^2\big)|\nabla u.\nabla \varphi|+u^{p+1}|\varphi_t|\Big)\\
&\qquad\qquad \leq \varepsilon\int\varphi \bigl(u^{-2}|\nabla u|^4+u^{2p}\bigr)+C(\varepsilon)\Big(1+\int\varphi|u_t|^2\Big).
\end{align*}
Set $\beta=\displaystyle\min_{1\le i\le m}\beta_{ii}>0$. Using 
$$
I+L\ge \sum_{i=1}^{m}\int\varphi u_i^{-2}|\nabla u_i|^4+\beta^2\sum_{i=1}^{m} \int\varphi u_i^{2p}
$$
and
$$u_i^ru_j^{r+1}\le u_i^p+u_j^p,\qquad u_i^{r+1}u_j^{r+1}\le u_i^{p+1}+u_j^{p+1},$$
it follows that
\begin{align}\label{mix}
 &\int|\nabla u_i|^2(|\Delta \varphi|+\varphi^{-1}|\nabla\varphi|^2+|\varphi_t|)\leq \varepsilon(I+L)+C(\varepsilon),\notag\\
&\int\Big(\varphi|\partial_tu_i|u_i^{-1}|\nabla u_i|^2+\big(\sum_{j=1}^{m}\beta_{ij} u_i^ru_j^{r+1}+|\partial_tu_i|+u_i^{-1}|\nabla u_i|^2\big)|\nabla u_i.\nabla \varphi|\Big)\notag\\
&\qquad\qquad \leq \varepsilon(I+L)+C(\varepsilon)\Big(1+\int\varphi|\partial_tu_i|^2\Big),\\
&\int\Big(\sum_{j=1}^{m}\beta_{ij} u_i^{r+1}u_j^{r+1}|\varphi_t|\Big) \leq \varepsilon(I+L)+C(\varepsilon)\Big(1+\int\varphi|\partial_tu_i|^2\Big).\notag
\end{align}
On the other hand, using (\ref{IPP}) and integrating by parts in space and in time, we have 
\begin{align*}
 &\sum_{i=1}^{m}\int \varphi|\partial_tu_i|^2
 =\sum_{i=1}^{m}\int \varphi (\partial_tu_i)\Big(\Delta u_i+\sum_{j=1}^{m}\beta_{ij} u_i^ru_j^{r+1}\Big)\\
&=-\sum_{i=1}^{m}\int\varphi \partial_t\Big(\frac{|\nabla u_i|^2}{2}\Big)
+\frac{1}{2(r+1)}\int\varphi \partial_t\Big(\sum_{i,j=1}^{m}\beta_{ij} u_i^{r+1}u_j^{r+1}\Big)-\sum_{i=1}^{m}\int (\partial_tu_i)\nabla\varphi.\nabla u_i\\
&=\int\varphi_t\sum_{i=1}^{m}\Big(\frac{|\nabla u_i|^2}{2}\Big)
-\frac{1}{2(r+1)}\int\varphi_t\Big(\sum_{i,j=1}^{m}\beta_{ij} u_i^{r+1}u_j^{r+1}\Big)-\sum_{i=1}^{m}\int (\partial_tu_i)\nabla\varphi.\nabla u_i\\
&\leq C\int|\varphi_t|\Big(\sum_{i=1}^{m}|\nabla u_i|^2+\sum_{i,j=1}^{m}\beta_{ij}u_i^{r+1}u_j^{r+1}\Big)\\
&\qquad+ \frac{1}{2}\sum_{i=1}^{m}\int\varphi|\partial_tu_i|^2+\frac{1}{2}\sum_{i=1}^{m} \int \varphi^{-1}|\nabla\varphi|^2|\nabla u_i|^2.
\end{align*}
(Note that, since $u_i$ is positive, it is smooth enough so that the above calculations are justified.) 
Therefore,
\begin{align}\label{timederivative}
 \sum_{i=1}^{m}\int \varphi|\partial_tu_i|^2&\leq C\sum_{i=1}^{m}\int|\varphi_t| \Big(|\nabla u_i|^2+\sum_{j=1}^{m}\beta_{ij}u_i^{r+1}u_j^{r+1}\Big)+ \sum_{i=1}^{m}\int \varphi^{-1}|\nabla\varphi|^2|\nabla u_i|^2\notag\\
 &\leq C\sum_{i,j=1}^{m}\int|\varphi_t| \beta_{ij}u_i^{r+1}u_j^{r+1}+ C(N,p, B)\varepsilon(I+L)+C(\varepsilon).
\end{align}
By (\ref{PropTest}) and Young 's inequality,
\begin{align*}
 \int |\varphi_t|u_i^{r+1}u_j^{r+1}&\leq 2\varepsilon\int\varphi(u_iu_j)^p+ C(\varepsilon)\int\varphi^{-(p+1)/(p-1)}|\varphi_t|^{2p/(p-1)}\\
 &\leq \varepsilon\int\varphi \Big(u_i^{2p}+ u_j^{2p}\Big)+ C(\varepsilon)\int\varphi^{-(p+1)/(p-1)}|\varphi_t|^{2p/(p-1)}\\
 &\leq \frac{\varepsilon}{\beta^2_{ii}}L_i+ \frac{\varepsilon}{\beta^2_{jj}}L_j+ C(\varepsilon). 
\end{align*}
Hence, 
\begin{align}
 \int |\varphi_t|\beta_{ij}u_i^{r+1}u_j^{r+1}&\leq \beta_{ij}\Big(\frac{\varepsilon}{\beta^2_{ii}}+\frac{\varepsilon}{\beta^2_{jj}}\Big)(I+L)+ C(\varepsilon).\label{time2}
\end{align}
Combing (\ref{timederivative}) and (\ref{time2}), we obtain
\begin{align}\label{time3}
\sum_{i=1}^{m}\int \varphi|\partial_tu_i|^2\leq C(N,p,B)\varepsilon(I+L)+ C(\varepsilon).
\end{align}
Therefore, it follows from (\ref{c5aa3}), (\ref{mix}) and (\ref{time3}) that
\begin{align*}
 I+L \leq C(\varepsilon)+ C(N,p,B) \varepsilon (I+L).
\end{align*}
By choosing $\varepsilon$ sufficiently small, we obtain $I,L \leq C$ and the Lemma follows.
\end{proof}

\medskip

{\it Proof of Theorem~\ref{th1}.}  
We first consider the case of bounded solutions.
Assume for contradiction that 
$U=(u_i)$ is a  nontrivial, bounded, nonnegative solution of (\ref{c51}) in ${\mathbb R}^N\times \mathbb R$.  
For each $i$, since the component $u_i$ is a supersolution of the heat equation, it
follows from the strong maximum principle that either $u_i$ is positive in ${\mathbb R}^N\times \mathbb R$,
or there exists $t_0\in{\mathbb R}$ such that $u_i=0$ in ${\mathbb R}^N\times (-\infty,t_0]$. 
In the latter case, since $U$ is bounded, we have $\partial_t u_i-\Delta u_i\le Cu_i$ for some constant $C>0$ and the maximum principle
then guarantees that $u_i=0$ in ${\mathbb R}^N\times (t_0,\infty)$, hence $u_i\equiv 0$.
By removing all the components which are identically zero and relabeling, we may assume without loss of generality that $u_i>0$ for $i=1,...,m$
(with $m\ge 2$, since nonexistence in the scalar case is already known by \cite{BV98}, see also \cite[Theorem~26.8]{QS07}). 

Now, for any $R>0$, we rescale
$$v_i(x,t)=R^{2/(p-1)}u_i(R x, R^2 t).$$
Then $V=(v_i)$ is also  a solution of (\ref{c51}). Since all $v_i$ are positive, we may apply Lemma~\ref{c5lem2} to deduce
\begin{align*} 
\int_{|y|<R/2}&\int_{|s|<R^2/2}\sum_{i=1}^{m}
\Big(\sum_{j=1}^{m}\beta_{ij} u_i^{r}u_j^{r+1}\Big)^2(y,s)dyds\\
&=R^{N+2-4p/(p-1)} \int_{|x|<1/2}\int_{|t|<1/2}\sum_{i=1}^{m}
\Big(\sum_{j=1}^{m}\beta_{ij} v_i^{r}v_j^{r+1}\Big)^2(x,t)dxdt\\
&\leq CR^{N+2-4p/(p-1)}.
\end{align*}
Since $p<p_B\le p_S$, by letting $R\to \infty$ we get $\sum_{i=1}^{m}
\bigl(\sum_{j=1}^{m}\beta_{ij} u_i^{r}u_j^{r+1}\bigr)^2\equiv 0$. 
Since $\beta_{ii}$ and $u_i$ are positive, this is a contradiction. 
This proves Theorem~\ref{th1} in the case of bounded solutions.

Finally, to treat the general case, we recall that the Liouville-type property of Theorem~\ref{th1} for bounded solutions
is sufficient for the proof of  Proposition~\ref{c5th5} (see the paragraph after Proposition~\ref{c5th5}).
But after a time shift, formula (\ref{universal2}) in Proposition~\ref{c5th5}
guarantees that any solution of (\ref{c51}) in ${\mathbb R}^N\times(-T,T)$ has to satisfy
$u(x,t)\le CT^{-1/(p-1)}$ in ${\mathbb R}^N\times(-T/2,T/2)$.
The conclusion then follows by letting $T\to \infty$.
\qed

\section{Proof of Theorems~\ref{Liouhalf} and \ref{general}}

\begin{proof}[Proof of Theorem~\ref{general}] It is done in three steps.
\smallskip

 {\bf Step 1.} {\it Notation.}
For $\lambda>0$, let 
\begin{align*}
 \mathbb T_\lambda=\{x\in {\mathbb R}^N:0<x_1<\lambda\}.
\end{align*}
Denote  
$$V_\lambda u_i(x,t):=u_i(2\lambda-x_1, x',t)-u_i(x_1,x',t),$$
 where $x'=(x_2,...,x_N)$. Let $v_i=V_\lambda u_i$, then $V=(v_i)$ satisfies
\begin{align}\label{moving}
\begin{cases}
\partial_t v_i-\Delta v_i =\displaystyle\sum_{j=1}^{m}c^\lambda_{ij}v_j, \quad (x,t)\in {\mathbb T_\lambda}\times \mathbb R, \quad i=1,2,...,m.\\
v_i=0, \quad x_1=\lambda,\; x'\in {\mathbb R}^{N-1}, \; t\in {\mathbb R},\;  i=1,...,m,\\
\noalign{\vskip 1mm}
v_i>0, \quad x_1=0,\; x'\in {\mathbb R}^{N-1}, \; t\in {\mathbb R},\;  i=1,...,m,
\end{cases}
\end{align}
where 
\begin{align}\label{moving2}
 c^\lambda_{ij}&=\int_{0}^{1}\frac{\partial f_i}{\partial u_j}(U+sV)ds.
\end{align}
We shall show that,  for any $\lambda>0$,
\begin{align}\label{S}
V\geq 0 \ \ \text{ for all } (x,t)\in {\mathbb T_\lambda}\times {\mathbb R}. \tag{S}
\end{align} 

For any positive $q,\lambda$ satisfying $\lambda\sqrt{q}<\pi$, we define the function
\begin{equation}\label{defh}
h(x)=\sin\Bigl[\frac12\Bigl(\pi+\sqrt{q} (2x_1-\lambda)\Bigr)\Bigr],
\end{equation}
which satisfies
\begin{align}\label{dancer}
\begin{cases} 
 -\Delta h=qh, \quad &x\in {\mathbb T}_\lambda,\\
 h(x)\ge \eta>0, \quad &x\in \overline{\mathbb T}_\lambda, \\
 |\nabla h|\le \sqrt{q}, \quad &x\in \overline{\mathbb T}_\lambda,
 \end{cases}
\end{align}
where $\eta=\sin\bigl[\frac12(\pi-\lambda\sqrt{q})\bigl]$.\footnote{ \,We note that this choice of $h$ is simpler than that in \cite{Dan92, PQS07b}, 
owing to the different form of maximum principle used in the subsequent steps
(which does not require $W\to 0$ at space infinity in (\ref{w}) or (\ref{w2})).}

\smallskip
 {\bf Step 2.} {\it Proof of (S) for small $\lambda$.} 
 Due to the boundedness of $U$, the coefficients $c^\lambda_{ij}$ are bounded (uniformly in $\lambda$). So we may fix $\gamma, q>0$ such that
\begin{align}\label{q}
q\ge \gamma+ \max_{i}\sup_{x\in {\mathbb R}^N_{+},\, t\in {\mathbb R},\, \lambda>0}\ \sum_{j=1}^{m}c^\lambda_{ij}(x,t). 
\end{align}

For any $\lambda>0$ small such that $\lambda^{-2}\pi^2>q$, let $h$ be  given by (\ref{defh}) and denote $w_i:=e^{\gamma t}v_i/h$.
Then $W=(w_i)$ satisfies 
\begin{align}\label{w}
\begin{cases}
\partial_t w_i-\Delta w_i -\frac{2\nabla h}{h}.\nabla w_i=(\gamma+c^{\lambda}_{ii}-q)w_i+\displaystyle\sum_{j=1\atop j\ne i}^mc^\lambda_{ij}w_j, 
\quad (x,t)\in {\mathbb T_\lambda}\times \mathbb R, \; i=1,...,m,\\
w_i\geq 0, \quad x\in \partial{\mathbb T_\lambda},\; t\in {\mathbb R},\;  i=1,...,m.
\end{cases}
\end{align}
 By assumption $(H_2)$, we see that $c^\lambda_{ij}$ is nonnegative for all $i\neq j$, so that the system (\ref{moving}) is cooperative.  
Moreover, by the definition of $q$ in (\ref{q}), we have
$$\bigl(\gamma+c^{\lambda}_{ii}-q\bigr)+\sum_{j\ne i}c^\lambda_{ij}\leq 0, \text { for all } i=1,...,m.$$
 We may thus apply the maximum principle for cooperative parabolic systems (see~Proposition~\ref{maxprinc}(ii) in Appendix).
The latter,  applied to $-W$ on ${\mathbb T_\lambda}\times (t_0,t)$ for any $t_0<t$, guarantees that 
\begin{align}\label{maxprincapp1}
\max_{i}\sup_{x\in {\mathbb T_\lambda}}w_i^{-}(x,t)\leq \max_{i}\sup_{x\in {\mathbb T_\lambda}}w_i^{-}(x,t_0),
\end{align}
where $z^{-}:=-\min(z,0)$. 
Consequently, 
\begin{align}\label{maxprincapp2}
\max_{i}\sup_{x\in {\mathbb T_\lambda}}\frac{v_i^{-}(x,t)}{h(x)}\leq e^{-\gamma(t-t_0)}\max_{i}\sup_{x\in {\mathbb T_\lambda}}\frac{v_i^{-}(x,t_0)}{h(x)}
 \leq e^{-\gamma(t-t_0)}\frac{\|U\|_\infty}{\eta}.
\end{align}
Letting $t_0\to -\infty$, we obtain $V\geq 0$ in ${\mathbb T_\lambda}\times \mathbb R$. 
Therefore, (S) holds for any $\lambda>0$ sufficiently small.

\smallskip
 {\bf Step 3.} {\it Proof of (S) for large $\lambda$.} 
Next, we denote
\begin{align}\label{lam}
\lambda_0=\sup\{\mu>0: \text{(S) holds for all } \lambda \in (0,\mu) \}.
\end{align}
The previous argument shows that $\lambda_0>0$, and we shall prove by contradiction that $\lambda_0=\infty$.
Assume $\lambda_0<\infty$. Then there is a sequence $\lambda_k\geq \lambda_0$ such that $\lambda_k\to \lambda_0$ and the set
\begin{align*}
Z_k=\{(x,t)\in {\mathbb T_{\lambda_k}}\times {\mathbb R}: \text{there exists } i \text{ such that } V_{\lambda_k}u_i(x,t)<0\} 
\end{align*}
is nonempty. Set
\begin{align*}
 m_k:=\sup_{(x,t)\in Z_k} \ \max\Big\{\max_{i}u_i(x,t),\ \max_{i}u_i(2\lambda_k-x_1,x',t)\Big\}.
\end{align*} 
We have the following two possibilities:
 
\smallskip
Case 1: $m_k\geq \varepsilon_0$ for some $\varepsilon_0>0$,  up to a subsequence;

\smallskip
Case 2: $m_k\to 0$.
 
\smallskip
First consider Case 1. Then there exist  sequences $x_1^k\in (0,\lambda_k)$, $z^k\in {\mathbb R}^{N-1}$, $t^k\in {\mathbb R}$ such that 
 $$(x_1^k, z^k,t^k)\in Z_k \  \text{ and } \max\Big\{\max_{i}u_i(x_1^k, z^k,t^k),\ \max_{i}u_i(2\lambda_k-x_1^k, z^k,t^k)\Big\} \geq \varepsilon_0 \;
 \text{ for any } k.$$
By extracting a subsequence of $k$, we may assume that there exists $i_0\in \{1,...,m\}$  such that,
for any $k$, 
\begin{equation}\label{Vlambdak0}
V_{\lambda_k}u_{i_0}(x_1^k,z^k, t^k)<0
\end{equation}
and
\begin{equation}\label{Vlambdak}
\max\Big\{\max_{i}u_i(x_1^k, z^k,t^k),\ \max_{i}u_i(2\lambda_k-x_1^k, z^k,t^k)\Big\} \geq \varepsilon_0.
\end{equation}
We may also assume that $x_1^k\to a$  for some $a\in [0,\lambda_0]$. Let
\begin{align*}
u_i^k(x,t)=u_i(x_1,x'+z^k, t+t^k), \qquad x=(x_1, x')\in {\mathbb R}^N_{+}, \quad t\in {\mathbb R}, \quad i=1,...,m.
\end{align*}
Since the sequence $U^k=(u_i^k)$ is uniformly bounded, using standard parabolic estimates, 
it follows that $U^k$ converges (up to a subsequence) in $C^{2,1}_{loc}(\overline{{\mathbb R}^N_{+}}
\times {\mathbb R})$ to a nonnegative solution $\tilde{U}=(\tilde{u}_i)$ of (\ref{coop}).

The definition of $\lambda_0$ implies that $V_{\lambda_0}u^k_{i}\geq 0$ in ${\mathbb T_{\lambda_0}}\times {\mathbb R}$ for any $i=1,...,m$. Hence, $V_{\lambda_0}\tilde{u}_{i}\geq 0$ in ${\mathbb T_{\lambda_0}}\times {\mathbb R}$ for any $i=1,...,m$.
Let $\tilde{v}_i:=V_{\lambda_0}\tilde{u}_i$. Then $(\tilde{v}_i)$ is a nonnegative solution of the system 
\begin{align*}
\begin{cases}
\partial_t \tilde{v}_i-\Delta \tilde{v}_i = \displaystyle\sum_{j=1}^{m}c^{\lambda_0}_{ij}\tilde{v}_j, \quad (x,t)\in {\mathbb T_{\lambda_0}}\times \mathbb R, \quad i=1,2,...,m,\\
\tilde{v}_i=0, \quad x_1=\lambda_0,\; x'\in {\mathbb R}^{N-1}, \; t\in {\mathbb R},\;  i=1,...,m,\\
\noalign{\vskip 1mm}
\tilde{v}_i\geq 0, \quad x_1=0,\; x'\in {\mathbb R}^{N-1}, \; t\in {\mathbb R},\;  i=1,...,m.
\end{cases}
\end{align*}
 From (\ref{Vlambdak0}) and (\ref{Vlambdak}), we deduce that 
 \begin{equation}\label{Vlambdak1}
V_{\lambda_0}\tilde{u}_{i_0}(a,0,0)\leq 0
\end{equation}
and
\begin{equation}\label{Vlambdak2}
\max_{i}\tilde{u}_{i}(a,0,0)\geq \varepsilon_0\ \text{ or } \ \max_{i}\tilde{u}_{i}(2\lambda_0-a,0,0)\geq \varepsilon_0.
\end{equation}
Owing to assumption $(H_4)$,  (\ref{Vlambdak2}) guarantees that
\begin{align*}
\tilde{u}_{i_0}(x,t)>0, \text{ for all } (x,t)\in  {\mathbb R}^N_{+}\times \mathbb R,
\end{align*}
hence $V_{\lambda_0}\tilde{u}_{i_0}(0,x',t)>0$ for any $x',t$.  
Since $z:=V_{\lambda_0}\tilde{u}_{i_0}$ satisfies
\begin{align*}
\begin{cases}
\partial_t z-\Delta z \ge c^{\lambda_0}_{{i_0}{i_0}}z, \quad (x,t)\in {\mathbb T_{\lambda_0}}\times \mathbb R,\\
z=0, \quad x_1=\lambda_0,\; x'\in {\mathbb R}^{N-1}, \; t\in {\mathbb R},\\
\noalign{\vskip 1mm}
 z > 0, \quad x_1=0,\; x'\in {\mathbb R}^{N-1}, \; t\in {\mathbb R},
\end{cases}
\end{align*}
we deduce from the (scalar) maximum principle that $V_{\lambda_0}\tilde{u}_{i_0}>0$ in ${\mathbb T_{\lambda_0}}\times {\mathbb R}$. 
It thus follows from (\ref{Vlambdak1}) that $a=\lambda_0$.
 By the Hopf boundary principle,
$$2\partial_{x_1}\tilde{u}_{i_0}(\lambda_0,0,0)=-\partial_{x_1}V_{\lambda_0}\tilde{u}_{i_0}(x_1,0,0)|_{x_1=\lambda_0}
 =-\partial_{x_1}z(\lambda_0,0,0) >0.$$
Consequently, $\partial_{x_1}\tilde{u}_{i_0}(x_1,0,0)$ is bounded below by a positive constant on an interval around $\lambda_0$ and this remains valid if $\tilde{u}_{i_0}$ is replaced by $u^k_{i_0}\approx \tilde{u}_{i_0}$. That is, there is $\delta >0$ such that
\begin{align}\label{par}
\partial_{x_1}u_{i_0}(x_1,z^k,t^k)=\partial_{x_1}u^k_{i_0}(x_1,0,0)>0, \; x_1\in [\lambda_0-\delta, \lambda_0+\delta]
\end{align}
 for $k$ sufficiently large. However, since $2\lambda_k-x_1^k>x_1^k$ both belong to  $[\lambda_0-\delta,\lambda_0+\delta]$ for large $k$, (\ref{par}) contradicts $V_{\lambda_k}u_{i_0}(x_1^k,z^k,t^k)<0$.
 
\smallskip
We next turn to Case 2. We consider the system (\ref{moving}) for large $k$. We are going to apply the maximum principle again,
 this time taking advantage of the fact that the parabolic inequalities need be satisfied only on the 
possible positivity set $Z_k$ (see~Proposition~\ref{maxprinc}(ii) in Appendix).
 Recalling $V=(V_{\lambda_k}u_i)$, it follows from $m_k\to 0$ that
$$\sup_{(x,t)\in Z_{k}} \bigl(|U(x,t)|+|V(x,t)|\bigr) \to 0 \; \text { as } k\to \infty.$$ 
In view of assumption $(H_3)$ and the definition (\ref{moving2}),  we deduce that
$$\limsup_{k\to \infty} \tilde{q}_k\  \le\ 0,\quad\hbox{ where }\tilde{q}_k:= \max_{i}\;\sup_{(x,t)\in Z_k}\sum_{j=1}^m c^{\lambda_k}_{ij}(x,t). $$ 
Fix a large $k$ and contants $q,\gamma>0$ such that $q=\tilde{q}_k+\gamma<\lambda_k^{-2}\pi^2 (\approx \lambda_0^{-2}\pi^2)$.
 Like in Step~2, we consider $w_i:=e^{\gamma t}v_i/h$ with $h$ given by (\ref{defh}) for $\lambda=\lambda_k$.
Let 
$$D_i:=\{(x,t)\in {\mathbb T_{\lambda_k}}\times \mathbb R;\ w_i(x,t) <0\}$$
 and note that $D_i\subset Z_k$. Then $(w_i)$ satisfies
\begin{align}\label{w2}
\begin{cases}
\partial_t w_i-\Delta w_i -\frac{2\nabla h}{h}.\nabla w_i=(\gamma+c^{\lambda_k}_{ii}-q)w_i+\displaystyle\sum_{j=1\atop j\ne i}^mc^{\lambda_k}_{ij}w_j, 
\quad (x,t)\in D_i, \ \ i=1,...,m,\\
w_i\geq 0, \quad (x,t)\in \partial{\mathbb T_{\lambda_k}}\times\mathbb R,\ \  i=1,...,m.
\end{cases}
\end{align}
By the maximum principle in Proposition~\ref{maxprinc}(ii), applied to $-W$, 
we obtain again (\ref{maxprincapp1})-(\ref{maxprincapp2}), and conclude that
$V  \geq 0$ in $\mathbb T_{\lambda_k}\times {\mathbb R}$. 
But this is a contradiction with the nonemptyness of $Z_k$. 

We have  thus reached a contradiction in both cases, which proves that $\lambda_0=\infty$ i.e., (S)~holds for any $\lambda>0$.  The Hopf boundary principle then gives
$$2\partial_{x_1}u_i(x,t)|_{x_1=\lambda}=-\partial_{x_1}V_{\lambda_0}u_i(x,t)|_{x_1=\lambda}>0$$
for any $\lambda>0$ and any $i$.  The theorem is proved.
\end{proof}

{\it Proof of Theorem~\ref{Liouhalf}.} It is done by induction on $m$.  
For $m=1$, it is reduced to the Liouville-type theorem for the equation $u_t-\Delta u=u^p$ in the half-space ${\mathbb R}^N_+\times \mathbb R$. 
This was proved in \cite[Theorem 2.1]{PQS07b}  for $N\le 2$, or $N\ge 3$ and $p<p_B(N-1)$. The result of Quittner~\cite{Qui14}, in conjunction with \cite[Theorem 2.4]{PQS07b}, guarantees the case $N=3$.  

Now fix $m\ge 2$ and assume that the theorem  holds for $m-1$. We shall prove  by contradiction that the theorem then  holds for $m$. Thus suppose that $U=(u_1,\dots,u_m)$   is a nontrivial, nonnegative, bounded classical solution of (\ref{halfspace}). 

 First we claim that 
\begin{align}\label{claimuipos1} 
\hbox{ any component $u_i$ is either identically zero or positive in ${\mathbb R}^N_+\times \mathbb R$.} 
\end{align} 
 Indeed
since $u_i$ is a supersolution to  the heat equation, it follows from  
the strong maximum principle that either $u_i$ is positive in ${\mathbb R}^N_{+}\times \mathbb R$,
or there exists $t_0\in{\mathbb R}$ such that $u_i=0$ in ${\mathbb R}^N_{+}\times (-\infty,t_0]$. 
In the latter case, since $U$ is bounded,  we have $\partial_t u_i-\Delta u_i\le Cu_i$ for some constant $C>0$ 
and the maximum principle then guarantees that $u_i=0$ in ${\mathbb R}^N_{+}\times (t_0,\infty)$, hence $u_i\equiv 0$.

 Next we claim that actually 
\begin{align}\label{uipos}
u_i>0 \quad\text{ for all }  i=1,...,m.
\end{align} 
Indeed, otherwise, by (\ref{claimuipos1}), there is a component $u_{i_0}$ which is identically zero. By removing this component, 
 we then obtain a nontrivial, nonnegative, bounded classical solution of problem (\ref{halfspace}) for $(m-1)$ components.
But this contradicts the induction assumption. 

We note that, owing to (\ref{cooperativecond}) and $r\geq1$, the system  satisfies all the conditions in Theorem~\ref{general}.
 In particular, assumption $(H_4)$ follows from (\ref{uipos}). Therefore, $\partial_{x_1}u_i(x,t)>0$ for all $(x,t)\in {{\mathbb R}^N_{+}}\times {\mathbb R}$ 
and for all $i=1,...,m$. 
Now, let 
$$u_{i,l}(x_1,x',t)=u_i(x_1+l, x',t), \quad (x_1, x',t)\in (-l,\infty)\times{\mathbb R}^{N-1}\times {\mathbb R}, \quad i=1,...,m.$$
From the boundedness of $u_i$ and parabolic estimates, letting $l\to \infty$ upon a subsequence, we can assume that $u_{i,l}$ converges uniformly on each compact set to $u_{i,\infty}$, where $(u_{i,\infty})$ is a bounded, nonnegative classical solution of problem~(\ref{c51}) in ${\mathbb R}^N\times {\mathbb R}$.  
The monotonicity of $u_i$ implies that $u_{i,\infty}$ is positive and independent of $x_1$. 
 We thus obtain a bounded, positive classical solution of problem~(\ref{c51}) in ${\mathbb R}^{N-1}\times {\mathbb R}$.
This contradicts Theorem~\ref{th1} and Remark~\ref{rm1}(b)
(here we assumed $N\ge 2$;  the case $N=1$ reduces to an ODE system for which nonexistence is obvious). 
\qed

\section{Appendix}
We give the following version of the maximum principle for cooperative systems, which is suitable to our needs.
Related results are given in \cite[Section 3.8]{PW67} or \cite[Theorem 3.2]{FP09}, but do not quite satisfy 
our requirements (unbounded domain, parabolic inequalities assumed on the positivity set only).
Here, for given vector $W:=(w_i)_{1\le i\le m}$ and real number $M$, the inequality $W\le M$ is understood as $w_i\le M$ for all $i=1,\dots,m$.

\begin{proposition}\label{maxprinc}
Let $m\ge 2$, $T>0$, let $\Omega$ be an arbitrary domain of ${\mathbb R}^N$
 (bounded or unbounded, not necessarily smooth).
 We denote $Q_T=\Omega\times(0,T)$ and $\partial_P Q_T=(\overline\Omega\times\{0\})\cup(\partial\Omega\times (0,T))$ its parabolic boundary.
Let $W=(w_i)\in C(\overline\Omega\times[0,T);{\mathbb R}^m)\cap C^{2,1}(Q_T;{\mathbb R}^m)$ and denote
$$D_i=\bigl\{(x,t)\in Q_T:\ w_i(x,t)>0\bigr\}.$$
Assume that $W$ is a bounded, classical solution of the system
\begin{equation}\label{ineqparab}
\partial_t w_i-\Delta w_i-K|\nabla w_i|\le  \sum_{j=1}^m c_{ij}(x,t)w_j\quad\hbox{ in $D_i$,\quad $i=1,\dots,m$},
\end{equation}
where $K>0$ is a constant and the coefficients $c_{ij}$ are measurable, bounded  and satisfy 
\begin{equation}\label{condoffdiag}
c_{ij}\ge 0 \quad\hbox{for all $i\neq j$.}
\end{equation}

(i) If $W\le 0$ on $\partial_PQ_T$, then $W\le 0$ in $Q_T$.
\smallskip

(ii) Let $M>0$ and assume in addition that 
\begin{equation}\label{condsum}
\sum_{j=1}^m c_{ij}\le 0,\quad i=1,\dots,m.
\end{equation}
If $W\le M$ on $\partial_PQ_T$, then $W\le M$ in $Q_T$.
\end{proposition}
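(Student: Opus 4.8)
The plan is to establish (i) first by a classical perturbation/barrier argument adapted to cooperative systems, then to deduce (ii) by a reduction to (i). For part (i), the key difficulty in the systems setting is that the maximum of the components need not be attained in a way that respects the individual parabolic inequalities, which hold only on the respective positivity sets $D_i$; I would handle this via the standard trick of introducing $\tilde w_i = e^{-\Lambda t} w_i$ for a large constant $\Lambda>0$ to be chosen in terms of $\sup_{i,j}\|c_{ij}\|_\infty$, so that the system becomes, on each $D_i$, $\partial_t\tilde w_i-\Delta\tilde w_i-K|\nabla\tilde w_i|\le \sum_j c_{ij}\tilde w_j-\Lambda\tilde w_i$, with the right-hand side strictly ``dissipative'' when $\tilde w_i>0$ is the largest component. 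First I would reduce to a bounded-domain situation: since $\Omega$ may be unbounded, I would localize using an auxiliary comparison function of the form $\psi(x)=\delta(1+|x|^2)$ (or $\delta e^{\mu|x|^2}$-type), which is a supersolution of the relevant scalar operator $\partial_t-\Delta-K|\nabla\cdot|$ up to controllable lower-order terms, add $\eps\psi$ to each $\tilde w_i$, and let $\eps\to 0$ at the end; this is where the boundedness hypothesis on $W$ is essential.

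The core of (i) is then the following: suppose for contradiction that $\sup_i\sup_{Q_T}\tilde w_i =: \sigma>0$. Using the added strictly-increasing-in-time term $-\eps t$ or the $e^{-\Lambda t}$ weight together with the localizer $\eps\psi$, I would arrange that the supremum is attained at an interior point $(x_0,t_0)\in Q_T$ for some index $i_0$, with $\tilde w_{i_0}(x_0,t_0)=\sigma>0$ and $\tilde w_j(x_0,t_0)\le \sigma$ for all $j$. Since $\tilde w_{i_0}(x_0,t_0)>0$, the point $(x_0,t_0)$ lies in $D_{i_0}$, so the parabolic inequality is available there; at an interior maximum we have $\partial_t\tilde w_{i_0}\ge 0$, $\Delta\tilde w_{i_0}\le 0$, $\nabla\tilde w_{i_0}=0$, whence $0\le \sum_j c_{i_0 j}\tilde w_j-\Lambda\sigma$. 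Using $c_{i_0 j}\ge 0$ for $j\ne i_0$ (hypothesis (\ref{condoffdiag})) and $\tilde w_j\le\sigma$, the off-diagonal terms are bounded by $\sigma\sum_{j\ne i_0}c_{i_0 j}$, and the diagonal term is $c_{i_0 i_0}\sigma$, giving $0\le\sigma\big(\sum_j c_{i_0 j}-\Lambda\big)$, which is strictly negative for $\Lambda$ large enough (and $\sigma>0$) — a contradiction. The perturbation terms are chosen small enough (or with the right sign) not to spoil this strict inequality, and letting the perturbation parameters tend to zero yields $\tilde w_i\le 0$, hence $W\le 0$, in $Q_T$.

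For part (ii), I would set $W'=(w_i-M)$ and check that $W'$ satisfies the hypotheses of (i). On the set where $w_i>M$ (a subset of $D_i$), the parabolic inequality (\ref{ineqparab}) gives $\partial_t w_i'-\Delta w_i'-K|\nabla w_i'|\le\sum_j c_{ij}w_j=\sum_j c_{ij}w_j'+M\sum_j c_{ij}\le\sum_j c_{ij}w_j'$, where the last step uses precisely the extra hypothesis (\ref{condsum}) that $\sum_j c_{ij}\le 0$ together with $M>0$. The off-diagonal sign condition is unchanged, $W'$ is still bounded, and $W'\le 0$ on $\partial_P Q_T$ by assumption; hence (i) applies to $W'$ and gives $W'\le 0$, i.e. $W\le M$, in $Q_T$.

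I expect the main obstacle to be the localization step on an unbounded domain: one must produce a barrier that simultaneously dominates $W$ near spatial infinity, is a supersolution of the gradient-perturbed operator $\partial_t-\Delta-K|\nabla\cdot|$ (the first-order term $K|\nabla\psi|$ must be absorbed), and is compatible with the cooperative structure so that adding it componentwise preserves the sign conditions — all while keeping the strict contradiction at the interior maximum intact. Functions of the form $\psi(x)=\delta\big(e^{\mu|x|^2}+A\big)$ with $\mu$ small relative to $1/T$, or the simpler quadratic $\delta(1+|x|^2)$ if one first replaces $Q_T$ by $\Omega\cap B_R\times(0,T)$ and sends $R\to\infty$, should suffice; the remaining computations (signs of $\partial_t$, $\Delta$, and gradient terms) are routine.
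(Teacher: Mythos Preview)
Your plan is correct and complete in outline, but it takes a genuinely different route from the paper for part~(i). The paper proves (i) by the Stampacchia energy method: multiply~(\ref{ineqparab}) by $(w_i)_+$, integrate over $\Omega$, absorb the gradient term via Young's inequality, use the cooperativity condition~(\ref{condoffdiag}) to control the cross terms $c_{ij}(w_j)(w_i)_+\le c_{ij}(w_j)_+(w_i)_+$, sum over $i$, and obtain a Gronwall inequality for $\sum_i\int_\Omega (w_i)_+^2$. The unbounded case is then handled by subtracting the barrier $\eps\psi$ with $\psi=(N+K)t+(1+|x|^2)^{1/2}$ (a supersolution of $\partial_t-\Delta-K|\nabla\cdot|$) so that the modified function is negative for $|x|$ large, reducing to the bounded case. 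Your approach is instead the classical pointwise argument of Protter--Weinberger type: after the exponential weight and barrier subtraction, locate an interior maximum of $\max_i \tilde w_i$, note that it lies in $D_{i_0}$, and reach a contradiction from the parabolic inequality using~(\ref{condoffdiag}) and the choice of $\Lambda$. Both methods are standard; the Stampacchia approach is more robust under low regularity, while yours is arguably more elementary here since the solutions are classical. For part~(ii) your reduction to (i) via $w_i'=w_i-M$ is exactly what the paper does.

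One caution on the localization step: your suggested barriers $\delta(1+|x|^2)$ and $\delta e^{\mu|x|^2}$ are \emph{not} supersolutions of $\partial_t-\Delta-K|\nabla\cdot|$ (both give a negative left-hand side), so subtracting them does not preserve the differential inequality in the direction you need, and the ``restrict to $\Omega\cap B_R$ then $R\to\infty$'' variant fails because $W\le 0$ is not known on the artificial boundary $\partial B_R\cap\Omega$. You need a barrier with enough positive time drift to offset $\Delta\psi+K|\nabla\psi|$; the paper's choice $\psi=(N+K)t+(1+|x|^2)^{1/2}$ (or, in the same spirit, $e^{\nu t}(1+|x|^2)^{1/2}$ with $\nu\ge N+K$) does the job, and with such a $\psi$ your pointwise argument goes through without further change.
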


{\it Proof.} (i)
It follows by the Stampacchia method, e.g. along the lines of \cite[Proposition 52.21]{QS07} and \cite[Remark 52.11(a)]{QS07}. 
We give the proof for the convenience of the reader and for completeness.

First consider the case when $\Omega$ is bounded. Let $i\in\{1,\dots,m\}$. 
By (\ref{ineqparab}), we have
$$\bigl[\partial_t w_i-\Delta w_i-K|\nabla w_i|\bigr](w_i)_+\le \sum_{j=1}^m c_{ij}(x,t)(w_j)(w_i)_+
\quad\hbox{in $Q_T$.}$$
For $t\in (0,T)$, since $(w_i)_+(\cdot,t)\in H^1_0(\Omega)$ by our assumption, we may integrate by parts, to obtain
\begin{align*}\frac12 \frac{d}{dt}\int_\Omega (w_i)_+^2
&=\int_\Omega (\partial_t w_i) (w_i)_+ \\
&\le -\int_\Omega \nabla w_i\cdot\nabla (w_i)_+ +K\int_\Omega |\nabla w_i| (w_i)_+
+ \sum_{j=1}^m \int_\Omega c_{ij}(w_j)(w_i)_+\\
&\le -\int_\Omega |\nabla (w_i)_+|^2 +\int_\Omega |\nabla (w_i)_+|^2 +\frac{K^2}{4}\int_\Omega (w_i)_+^2
+ \sum_{j=1}^m \int_\Omega c_{ij}(w_j)(w_i)_+.
\end{align*}
By assumption (\ref{condoffdiag}), it follows that
\begin{align*}
\frac12 \frac{d}{dt}\int_\Omega (w_i)_+^2
&\le\frac{K^2}{4}\int_\Omega (w_i)_+^2 + \int_\Omega c_{ii}(w_i)_+^2
+ \sum_{{j=1}\atop j\ne i}^m \int_\Omega c_{ij}(w_j)_+(w_i)_+ \\
&\le\Bigl(\frac{K^2}{4}+\|c_{ii}\|_\infty\Bigr)\int_\Omega (w_i)_+^2
+ \frac12\sum_{{j=1}\atop j\ne i}^m \int_\Omega c_{ij}\bigl((w_j)_+^2+(w_i)_+^2\bigr).
\end{align*}
Adding up for $i=1,\dots,m$, we get
$$\frac{d}{dt}\sum_{j=1}^m \int_\Omega (w_i)_+^2\le L\sum_{j=1}^m  \int_\Omega (w_i)_+^2$$
for some constant $L>0$.
Since $W\le 0$ at $t=0$, it follows by integration that $\sum_{j=1}^m \int_\Omega (w_i)_+^2\le 0$, hence 
$W\le 0$ in $Q_T$.
\smallskip

Now, in the case of an unbounded domain, we fix $\eps>0$ and consider the modified functions
$$\tilde w_i=e^{-\lambda t}w_i-\eps\psi, \qquad \psi=(N+K)t +(1+|x|^2)^{1/2}>0$$
with $\lambda>0$ to be chosen. We also set
$$\tilde D_i=\bigl\{(x,t)\in Q_T;\ \tilde w_i(x,t)>0\bigr\}.$$
Since $\psi_t-\Delta\psi-K|\nabla\psi|\ge 0$, we have, in $\tilde D_i\subset D_i$,
\begin{align*}
\partial_t \tilde w_i-\Delta \tilde w_i-K|\nabla\tilde w_i|
&\le e^{-\lambda t}\bigl[\partial_t w_i-\Delta w_i-K|\nabla w_i|-\lambda w_i\bigr] -\eps\bigl[\psi_t-\Delta\psi-K|\nabla\psi|\bigr]\\
&\le e^{-\lambda t}\bigl[-\lambda w_i+\sum_{j=1}^m c_{ij}(x,t)w_j\bigr] \\
&= \bigl[-\lambda \tilde w_i+\sum_{j=1}^m c_{ij}(x,t)\tilde w_j\bigr]+\eps\bigl[-\lambda +\sum_{j=1}^m c_{ij}(x,t)\bigr]\psi \\
&\le (c_{ii}(x,t)-\lambda) \tilde w_i+\sum_{{j=1 \atop j\ne i}}^m c_{ij}(x,t)\tilde w_j,
\end{align*}
by choosing $\lambda=\max_i\sum_{j=1}^m \|c_{ij}\|_\infty$. Noting that
$\tilde w_i<0$ for $|x|$ large, we may then apply the previous argument to get $\tilde W\le 0$ in $Q_T$.
The conclusion follows upon letting~$\eps\to 0$.
\smallskip

(ii) It suffices to apply assertion (i) to the functions $\hat w_i:=w_i-M$, noting
that, in view of assumption (\ref{condsum}), we have, for each $i=1,\dots,m$,
\begin{align*}\partial_t \hat w_i-\Delta \hat w_i-  K |\nabla \hat w_i|
&=\partial_t w_i-\Delta w_i-  K |\nabla w_i| \\
&\le \sum_{j=1}^m c_{ij}(x,t)(\hat w_j+M)\le \sum_{j=1}^m c_{ij}(x,t)\hat w_j
\end{align*}
in $D_i\supset\bigl\{(x,t)\in Q_T:\ \hat w_i(x,t)>0\bigr\}$.
\qed

\section*{Acknowledgments} The first author is supported  by Vietnam National Foundation for Science and Technology Development (NAFOSTED) under grant number 101.02-2014.06.
The second author is partially supported by the Labex MME-DII (ANR11-LBX-0023-01).

\end{document}